\newif\ifsup\suptrue
\setlist[itemize]{noitemsep}
\setlist[enumerate]{noitemsep}
\newtheorem{theorem}{Theorem}
\newtheorem{corollary}{Corollary}
\newtheorem{lemma}{Lemma}
\newtheorem{remark}{Remark}
\newenvironment{proofref}[1]{\noindent{\bf Proof (of~#1):}}{\qed\medskip}
\renewcommand{\epsilon}{\varepsilon}
\renewcommand{\leq}{\leqslant}
\renewcommand{\geq}{\geqslant}
\renewcommand{\phi}{\varphi}
\renewcommand{\P}{\mathbb{P}}
\newcommand{\Q}{\mathbb{Q}}
\newcommand{\R}{\mathbb{R}}
\newcommand{\cB}{\mathcal{B}}
\newcommand{\cC}{\mathcal{C}}
\newcommand{\cF}{\mathcal{F}}
\newcommand{\cL}{\mathcal{L}}
\newcommand{\cN}{\mathcal{N}}
\newcommand{\cV}{\mathcal{V}}
\newcommand{\Prob}{\mathbb{P}}
\newcommand{\E}{\mathbb{E}}
\newcommand{\indicator}[1]{\mathds{1}_{#1}}
\newcommand{\Int}{\mathrm{int}}
\newcommand{\eqdef}{\triangleq}
\newcommand{\norm}[1]{\left\lVert#1\right\rVert}
\newcommand{\bigO}[1]{\mathcal{O}\!\left(#1\right)}
\newcommand{\Reg}{R}
\DeclareMathOperator{\Var}{Var}
\DeclareMathOperator{\clip}{clip}
\newcommand{\kl}{\operatorname{kl}}
\newcommand{\Ber}{\operatorname{Ber}}
\renewcommand{\Pr}[1]{\mathbb{P}\left(#1\right)}
\newcommand{\Qri}[1]{\mathbb{Q}_i\left(#1\right)}
\newcommand{\set}[1]{\left\{#1\right\}}
\newcommand{\KL}{\operatorname{KL}}
\title{Refined Lower Bounds for Adversarial Bandits}
\author{
  Sébastien Gerchinovitz \\
  Institut de Mathématiques de Toulouse\\
  Université Toulouse 3 Paul Sabatier\\
  Toulouse, 31062, France\\
  \texttt{sebastien.gerchinovitz@math.univ-toulouse.fr} \\
  \And
  Tor Lattimore\\
  Department of Computing Science \\
  University of Alberta \\
  Edmonton, Canada \\
  \texttt{tor.lattimore@gmail.com} \\
}
\begin{document}

\maketitle

\begin{abstract}
We provide new lower bounds on the regret that must be suffered by adversarial bandit algorithms. 
The new results show that recent upper bounds that either (a) hold with high-probability or (b) depend on the total loss
of the best arm or (c) depend on the quadratic variation of the losses, are close to tight. 
Besides this we prove two impossibility results. First, the existence of a single arm that is optimal in every round cannot improve the 
regret in the worst case. Second, the regret cannot scale with the effective range of the losses. 
In contrast, both results are possible in the full-information setting.
\end{abstract}

\section{Introduction}

We consider the standard $K$-armed adversarial bandit problem, which is a game played over $T$ rounds between a learner and
an adversary.
In every round $t \in \set{1,\ldots,T}$ the learner chooses a probability distribution 
$p_t = (p_{i,t})_{1 \leq i \leq K}$ over $\set{1,\ldots,K}$. 
The adversary then chooses a loss vector $\smash{\ell_t = (\ell_{i,t})_{1 \leq i \leq K} \in [0,1]^K}$, which may depend on $p_t$. Finally the learner samples 
an action from $p_t$ denoted by $I_t \in \set{1,\ldots,K}$ and observes her own loss $\ell_{I_t,t}$.
The learner would like to minimise her regret, which is the difference between cumulative loss suffered and the loss suffered 
by the optimal action in hindsight:\\[-0.5cm]
\begin{align*}
R_T(\ell_{1:T}) = \sum_{t=1}^T \ell_{I_t,t} - \min_{1 \leq i \leq K} \sum_{t=1}^T \ell_{i,t}\,,
\end{align*}
where $\ell_{1:T} \in [0,1]^{TK}$ is the sequence of losses chosen by the adversary.
A famous strategy is called Exp3, which satisfies $\smash{\E[R_T(\ell_{1:T})] = \mathcal O(\sqrt{KT \log(K)}))}$ where 
the expectation is taken over the randomness in the algorithm and the choices of the adversary \citep{AuCBFrSc-02-NonStochasticBandits}. 
There is also a lower bound showing that for every learner there is an adversary for which the expected regret is $\smash{\E[R_T(\ell_{1:T})] = \Omega(\sqrt{KT})}$
\citep{ACFS95}.
If the losses are chosen ahead of time, then the adversary is called oblivious, and in this case there exists a learner for 
which $\smash{\E[R_T(\ell_{1:T})] = \mathcal O(\sqrt{KT})}$ \citep{AB09}.
One might think that this is the end of the story, but it is not so. While the worst-case expected regret is one quantity of interest, there are many situations where a
refined regret guarantee is more informative. Recent research on adversarial bandits has primarily 
focussed on these issues, especially the questions of obtaining regret guarantees that hold with high probability as well as stronger guarantees when the losses
are ``nice'' in some sense. While there are now a wide range of strategies with upper bounds that depend on various quantities, the literature is missing 
lower bounds for many cases, some of which we now provide.

We focus on three classes of lower bound, which are described in detail below. The first addresses the optimal regret achievable 
with high probability, where we show there is little room for improvement over existing strategies.
Our other results concern lower bounds that depend on some kind of regularity in the losses (``nice'' data). Specifically
we prove lower bounds that replace $T$ in the regret bound with the loss of the best action (called first-order bounds) and
also with the quadratic variation of the losses (called second-order bounds).

\paragraph*{High-probability bounds}
Existing strategies Exp3.P \citep{AuCBFrSc-02-NonStochasticBandits} and Exp3-IX \citep{Neu-15-ExploreNoMore} are tuned with a confidence parameter $\delta \in (0,1)$
and satisfy, for all $\ell_{1:T} \in [0,1]^{KT}$,
\begin{align}
\label{eq:hp1} \Pr{R_T(\ell_{1:T}) \geq c\sqrt{KT \log(K/\delta)}} \leq \delta
\end{align}
for some universal constant $c > 0$. An alternative tuning of Exp-IX or Exp3.P \citep{BuCB-12-SurveyBandits} leads to a single algorithm for which, for all $\ell_{1:T} \in [0,1]^{KT}$,
\begin{align}
\forall \delta \in (0, 1) \qquad \label{eq:hp2} \Pr{R_T(\ell_{1:T}) \geq c\sqrt{KT}\left(\sqrt{\log(K)} + \frac{\log(1/\delta)}{\sqrt{\log(K)}}\right)} \leq \delta\,.
\end{align}
The difference is that in (\ref{eq:hp1}) the algorithm depends on $\delta$ while in (\ref{eq:hp2}) it does not. The cost of not knowing $\delta$
is that the $\log(1/\delta)$ moves outside the square root.
In Section \ref{sec:zero-order} we prove two lower bounds showing that there is little room for improvement in either (\ref{eq:hp1}) or (\ref{eq:hp2}).


\paragraph{First-order bounds} An improvement over the worst-case regret bound of $\mathcal{O}(\sqrt{T K})$ is the 
so-called \emph{improvement for small losses}. 
Specifically, there exist strategies (eg., FPL-TRIX by \cite{Neu-15-FirstOrderBandits} with earlier results by \cite{Sto-05-these,allenberg06Hannan,RaSr-13colt-PredictableSequences}) 
such that for all $\ell_{1:T} \in [0,1]^{KT}$\\[-0.5cm]
\begin{align}
\label{eq:Neu-firstorder}
\E[R_T(\ell_{1:T})] \leq \bigO{\sqrt{L^*_T K \log(K)} + K \log(KT)},\; \text{with } L^*_T = \min_{1 \leq i \leq K} \sum_{t=1}^T \ell_{i,t}\,,
\end{align}
where the expectation is with respect to the internal randomisation of the algorithm (the losses are fixed).
This result improves on the $\smash{\mathcal O(\sqrt{KT})}$ bounds since $L^*_T \leq T$ is always guaranteed and sometimes $L^*_T$ is much smaller than $T$.
In order to evaluate the optimality of this bound, we first rewrite it in terms of the small-loss balls $\cB_{\alpha,T}$ defined for all $\alpha \in [0,1]$ and $T \geq 1$ by
\begin{equation}
\label{eq:small-loss-ball}
    \cB_{\alpha,T} \eqdef \left\{ \ell_{1:T} \in [0,1]^{KT} \,: \; \frac{L^*_T}{T} \leq \alpha \right\}~.
\end{equation}

\begin{corollary}
\label{thm:first-order-ball}
The first-order regret bound~\eqref{eq:Neu-firstorder} of~\citet{Neu-15-FirstOrderBandits} is equivalent to:
\begin{align*}
\forall \alpha \in [0,1], \qquad \sup_{\ell_{1:T} \in \cB_{\alpha,T}} \E[\Reg_T(\ell_{1:T})] \leq  \bigO{\sqrt{\alpha T K \log(K)} + K \log(KT)}\,.
\end{align*}
\end{corollary}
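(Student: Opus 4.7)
The plan is to prove the equivalence by showing both directions are essentially immediate from the definition of the small-loss ball $\cB_{\alpha, T}$, i.e. from $\ell_{1:T} \in \cB_{\alpha, T} \iff L^*_T \leq \alpha T$.

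First I would prove the forward implication ($\eqref{eq:Neu-firstorder} \Rightarrow$ the ball bound). Fix any $\alpha \in [0,1]$ and any $\ell_{1:T} \in \cB_{\alpha, T}$. By definition of $\cB_{\alpha, T}$, we have $L^*_T \leq \alpha T$, and so monotonicity of $x \mapsto \sqrt{x}$ combined with \eqref{eq:Neu-firstorder} yields
\begin{equation*}
\E[R_T(\ell_{1:T})] \leq c\left(\sqrt{L^*_T K \log(K)} + K \log(KT)\right) \leq c\left(\sqrt{\alpha T K \log(K)} + K \log(KT)\right),
\end{equation*}
where $c>0$ is the constant hidden in the $\bigO{\cdot}$ of \eqref{eq:Neu-firstorder}. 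Since the right-hand side no longer depends on $\ell_{1:T}$, taking the supremum over $\ell_{1:T} \in \cB_{\alpha,T}$ gives the claimed inequality.

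For the reverse implication, fix an arbitrary loss sequence $\ell_{1:T} \in [0,1]^{KT}$ and set the particular choice $\alpha_0 \eqdef L^*_T / T \in [0,1]$. Then by construction $\ell_{1:T} \in \cB_{\alpha_0, T}$, so applying the ball bound at $\alpha = \alpha_0$ yields
\begin{equation*}
\E[R_T(\ell_{1:T})] \leq \sup_{\ell'_{1:T} \in \cB_{\alpha_0,T}} \E[R_T(\ell'_{1:T})] \leq c'\left(\sqrt{\alpha_0 T K \log(K)} + K \log(KT)\right) = c'\left(\sqrt{L^*_T K \log(K)} + K \log(KT)\right),
\end{equation*}
which is precisely \eqref{eq:Neu-firstorder}.

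There is no real obstacle here: the corollary is just a reformulation in which the loss-dependent quantity $L^*_T$ is replaced by a parameter $\alpha$ indexing a worst-case class. The only thing to be careful about is that the supremum in the ball formulation is over all adversaries producing losses with $L^*_T/T \leq \alpha$, so one should check that the $\cB_{\alpha,T}$ formulation does not silently require uniformity in $\alpha$ beyond what \eqref{eq:Neu-firstorder} provides — but since both bounds share the same universal constant and the mapping $\alpha_0 = L^*_T/T$ is well-defined for every $\ell_{1:T}$, the equivalence is complete.
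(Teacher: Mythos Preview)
Your proof is correct and matches the paper's approach: the paper simply states that ``the proof is straightforward'' without giving any details, and your two-direction argument via $L^*_T \leq \alpha T$ and the choice $\alpha_0 = L^*_T/T$ is exactly the intended verification.
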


The proof is straightforward.
Our main contribution in Section~\ref{sec:first-order} is a lower bound of the order of $\smash{\sqrt{\alpha T K}}$ 
for all $\alpha \in \Omega(\log(T) / T)$.
This minimax lower bound shows that we cannot hope for a better bound than~\eqref{eq:Neu-firstorder} (up to log factors) if we only know the value of~$L^*_T$.

\paragraph{Second-order bounds} 
Another type of improved regret bound was derived by \cite{HaKa-11-BenignBandits} and involves a second-order quantity called the quadratic variation:\\[-0.3cm]
\begin{equation}
Q_T = \sum_{t=1}^T \norm{\ell_t - \mu_T}_2^2 \leq \frac{T K}{4}~,
\label{eq:quadratic-variation}
\end{equation}
\ \\[-0.2cm]
where $\mu_T = \frac{1}{T}\sum_{t=1}^T \ell_t$ is the mean of all loss vectors. (In other words, $Q_T/T$ is the sum of the empirical variances of all the $K$ arms). \citet{HaKa-11-BenignBandits} addressed the general online linear optimisation setting. In the particular case of adversarial $K$-armed bandits with an oblivious adversary (as is the case here), they showed that there exists an efficient algorithm such that for some absolute constant $c > 0$ and for all $T \geq 2$
\begin{equation}
\label{eq:upperbound-QT}
\forall \ell_{1:T} \in [0,1]^{KT}, \qquad \E[\Reg_T(\ell_{1:T})] \leq c \left(K^2 \sqrt{Q_T \log T}+ K^{1.5} \log^2 T + K^{2.5} \log T \right)~.
\end{equation}

As before we can rewrite the regret bound~\eqref{eq:upperbound-QT} in terms of the small-variation balls $\cV_{\alpha,T}$ defined for all $\alpha \in [0,1/4]$ and $T \geq 1$ by
\begin{equation}
\label{eq:small-variation-ball}
    \cV_{\alpha,T} \eqdef \left\{ \ell_{1:T} \in [0,1]^{KT} \,: \; \frac{Q_T}{TK} \leq \alpha \right\}~.
\end{equation}

\begin{corollary}
\label{thm:second-order-variationball}
The second-order regret bound~\eqref{eq:upperbound-QT} of~\citet{HaKa-11-BenignBandits} is equivalent to:
\[
\forall \alpha \in [0,1/4], \qquad \sup_{\ell_{1:T} \in \cV_{\alpha,T}} \E[\Reg_T(\ell_{1:T})] \leq  c \left(K^2 \sqrt{\alpha T K \log T}+ K^{3/2} \log^2 T + K^{5/2} \log T \right)~.
\]
\end{corollary}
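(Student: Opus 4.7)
The plan is to establish the equivalence by showing each direction separately, since the claim is simply a reformulation of the pointwise bound~\eqref{eq:upperbound-QT} in terms of the level sets $\cV_{\alpha,T}$. The key observation is that the definition~\eqref{eq:small-variation-ball} can be rewritten as $Q_T \leq \alpha T K$, so the quadratic variation $Q_T$ and the parameter $\alpha T K$ are interchangeable upper bounds on one another depending on which quantity one treats as fixed.

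For the forward direction, I would assume~\eqref{eq:upperbound-QT} and fix any $\alpha \in [0,1/4]$. For every $\ell_{1:T} \in \cV_{\alpha,T}$, the membership condition gives $Q_T \leq \alpha T K$. Substituting this upper bound into~\eqref{eq:upperbound-QT} and using monotonicity of $\sqrt{\cdot}$ yields
\[
\E[\Reg_T(\ell_{1:T})] \leq c\left(K^2 \sqrt{\alpha T K \log T} + K^{3/2} \log^2 T + K^{5/2} \log T \right),
\]
where I have rewritten $K^{1.5}$ and $K^{2.5}$ in fractional form to match the corollary. Since this bound is uniform in $\ell_{1:T} \in \cV_{\alpha,T}$, taking the supremum preserves the inequality.

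For the reverse direction, I would fix an arbitrary $\ell_{1:T} \in [0,1]^{KT}$ and define $\alpha \eqdef Q_T/(TK)$. The bound $Q_T \leq TK/4$ from~\eqref{eq:quadratic-variation} guarantees $\alpha \in [0,1/4]$, and by construction $\ell_{1:T} \in \cV_{\alpha,T}$. Applying the assumed supremum bound at this particular $\alpha$ and substituting $\alpha T K = Q_T$ recovers~\eqref{eq:upperbound-QT} for this $\ell_{1:T}$. Since $\ell_{1:T}$ was arbitrary, this completes the equivalence.

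There is no real obstacle here: the corollary is a tautological rewriting in terms of small-variation balls, entirely analogous to Corollary~\ref{thm:first-order-ball} for first-order bounds. The only minor care point is verifying that $\alpha$ ranges over exactly $[0,1/4]$ rather than $[0,1]$, which is ensured by the trivial upper bound $Q_T \leq TK/4$ stated in~\eqref{eq:quadratic-variation}; this guarantees that restricting to $\alpha \in [0,1/4]$ in the statement loses no generality when taking suprema.
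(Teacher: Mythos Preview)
Your proposal is correct and matches the paper's approach: the paper simply notes that the proof is straightforward (because the losses are deterministic and fixed in advance by an oblivious adversary) without writing out any details. Your two-direction argument is exactly the intended straightforward verification, with the only substantive observation being that $Q_T \leq TK/4$ ensures the parameter $\alpha = Q_T/(TK)$ lands in $[0,1/4]$.
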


The proof is straightforward because the losses are deterministic and fixed in advance by an oblivious adversary.
In Section~\ref{sec:second-order} we provide a lower bound of order $\smash{\sqrt{\alpha T K}}$ that holds
whenever $\alpha = \Omega(\log(T) / T)$.
This minimax lower bound shows that we cannot hope for a bound better than~\eqref{eq:small-variation-ball} by more than a factor of $K^2\sqrt{\log T}$ 
if we only know the value of~$Q_T$. Closing the gap is left as an open question.

\paragraph{Two impossibility results in the bandit setting} We also show in Section~\ref{sec:second-order} that, in contrast to the full-information 
setting, regret bounds involving the cumulative variance of the algorithm as in \citep{CeMaSt-07-SecondOrderRegretBounds} cannot be obtained 
in the bandit setting. More precisely, we prove that two consequences that hold true in the full-information case, namely: (i) a regret bound 
proportional to the effective range of the losses and (ii) a bounded regret if one arm performs best at all rounds, must fail in the worst case 
for every bandit algorithm.

\paragraph{Additional notation and key tools}
Before the theorems we develop some additional notation and describe the generic ideas in the proofs.
For $1 \leq i \leq K$ let $N_i(t)$ be the number of times action $i$ has been chosen after round $t$.
All our lower bounds are derived by analysing the regret incurred by strategies when facing randomised adversaries that choose the losses for all
actions from the same joint distribution in every round (sometimes independently for each action and sometimes not).
$\Ber(\alpha)$ denotes the Bernoulli distribution with parameter $\alpha \in [0,1]$.
If $\P$ and $\Q$ are measures on the same probability space, then $\KL(\P, \Q)$ is the KL-divergence between them.
For $a < b$ we define $\smash{\clip_{[a,b]}(x) = \min\set{b, \max\set{a, x}}}$ and for $x, y \in \R$ we let $x \vee y = \max\{x,y\}$.
Our main tools throughout the analysis are the following information-theoretic lemmas. The first bounds the KL divergence between the laws of the
observed losses/actions for two distributions on the losses. 

\begin{lemma} \label{lem:KL-computing}
Fix a randomised bandit algorithm and two probability distributions $Q_1$ and $Q_2$ on~$[0,1]^K$. Assume the loss vectors $\ell_1,\ldots,\ell_T \in [0,1]^K$ are drawn {i.i.d.} from either $Q_1$ or $Q_2$, and denote by $\Q_j$ the joint probability distribution on all sources of randomness when $Q_j$ is used 
(formally, $\smash{\Q_j = \Prob_{\Int} \otimes (Q_j^{\otimes T})}$, where $\Prob_{\Int}$ is the probability distribution used by the algorithm for its internal randomisation).
Let $t \geq 1$. Denote by $\smash{h_t = (I_s,\ell_{I_s,s})_{1 \leq s \leq t-1}}$ the history available at the beginning of round $t$, by $\smash{\Q_j^{(h_t,I_t)}}$ the law of $(h_t,I_t)$ under $\Q_j$, and by $Q_{j,i}$ the $i$th marginal distribution of $Q_j$. Then,\\[-0.6cm]
\begin{align*}
\KL\left(\Q_1^{(h_t,I_t)}, \Q_2^{(h_t,I_t)}\right) = \sum_{i=1}^K \E_{\Q_1} \bigl[N_i(t-1)\bigr]  \KL\bigl(Q_{1,i},Q_{2,i}\bigr)~.
\end{align*}
\end{lemma}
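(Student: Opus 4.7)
The plan is to apply the chain rule for KL divergence along the sequence of observations generated during the interaction, and to exploit the fact that the algorithm's internal randomisation is the same under $\Q_1$ and $\Q_2$, so that the only contributions to the divergence come from the loss observations themselves.

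More precisely, I would first write $(h_t, I_t) = (I_1, \ell_{I_1,1}, I_2, \ell_{I_2,2}, \ldots, I_{t-1}, \ell_{I_{t-1},t-1}, I_t)$ and apply the chain rule to decompose $\KL(\Q_1^{(h_t, I_t)}, \Q_2^{(h_t, I_t)})$ into a sum of conditional KL terms, one for each component. The conditional law of each action $I_s$ given $h_s$ is determined by the algorithm (a deterministic map from $h_s$ and $\Prob_{\Int}$ to a distribution on $\{1,\ldots,K\}$) and is therefore identical under $\Q_1$ and $\Q_2$; hence all terms corresponding to actions contribute $0$. The remaining terms correspond to the observed losses $\ell_{I_s, s}$.

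For each such term, I would argue that given $(h_s, I_s = i)$, the conditional distribution of $\ell_{i,s}$ under $\Q_j$ is exactly $Q_{j,i}$ (the $i$th marginal of $Q_j$), because the loss vectors are drawn i.i.d.\ from $Q_j$, independently of the internal randomisation and hence of $I_s$, so conditioning on $(h_s, I_s)$ only fixes which coordinate of $\ell_s$ is being revealed. Consequently the conditional KL at step $s$ equals
\begin{align*}
\sum_{i=1}^K \Prob_{\Q_1}(I_s = i \mid h_s) \, \KL(Q_{1,i}, Q_{2,i})~,
\end{align*}
and taking expectation under $\Q_1$ yields $\sum_{i=1}^K \Prob_{\Q_1}(I_s = i) \, \KL(Q_{1,i}, Q_{2,i})$.

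Summing over $s = 1, \ldots, t-1$ and using $\sum_{s=1}^{t-1} \Prob_{\Q_1}(I_s = i) = \E_{\Q_1}[N_i(t-1)]$ gives the claimed identity. The main obstacle is being careful with the conditioning: one must verify that the losses at round $s$ are indeed independent of $(h_s, I_s, \Prob_{\Int})$ apart from the choice of coordinate, so that conditioning genuinely reduces to the $i$th marginal of $Q_j$. This follows cleanly from the product structure $\Q_j = \Prob_{\Int} \otimes Q_j^{\otimes T}$, which makes the chain rule bookkeeping transparent once written out.
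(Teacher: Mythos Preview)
Your proposal is correct and follows essentially the same approach as the paper: apply the chain rule for KL divergence along the sequence $(I_1,\ell_{I_1,1},\ldots,I_{t-1},\ell_{I_{t-1},t-1},I_t)$, observe that the conditional law of each $I_s$ given $h_s$ is the same under $\Q_1$ and $\Q_2$ so those terms vanish, identify the conditional law of $\ell_{I_s,s}$ given $(h_s,I_s)$ with the $I_s$th marginal $Q_{j,I_s}$, and sum. The paper's write-up is only slightly more formal in displaying the chain-rule decomposition explicitly, but the argument is the same.
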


\vspace{-0.4cm}
Results of roughly this form are well known and the proof follows immediately from the chain rule for the relative entropy and the independence 
of the loss vectors across time (see \citep{AuCBFrSc-02-NonStochasticBandits} or \ifsup Appendix \ref{app:lem:KL-computing}). \else the supplementary material). \fi 
One difference is that the losses need not be independent across the arms, which we heavily exploit in our proofs by using correlated losses.
The second key lemma is an alternative to Pinsker's inequality that proves useful when the Kullback-Leibler divergence is larger than $2$. It
has previously been used for bandit lower bounds (in the stochastic setting) by \cite{BPR13}.

\begin{lemma}[Lemma~2.6 in \citealt{Tsy08}] \label{lem:TsyLemma2.6}
Let $P$ and $Q$ be two probability distributions on the same measurable space. Then, for every measurable subset $A$ (whose complement we denote by $A^c$),\\[-0.2cm]
\[
P(A) + Q(A^c) \geq \frac{1}{2} \, \exp\bigl(-\KL(P,Q)\bigr)~.
\]
\end{lemma}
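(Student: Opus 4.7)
The plan is to reduce the lower bound on $P(A) + Q(A^c)$ to a quantity that depends only on $P$ and $Q$, then relate this quantity to the Hellinger affinity, and finally to the Kullback--Leibler divergence via Jensen's inequality. Let $\mu$ be a common dominating measure (for instance $\mu = (P + Q)/2$) and set $p = dP/d\mu$, $q = dQ/d\mu$.

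First I would eliminate the dependence on $A$. Since pointwise $p \geq \min(p,q)$ on $A$ and $q \geq \min(p,q)$ on $A^c$,
\[
P(A) + Q(A^c) \;=\; \int_A p\, d\mu + \int_{A^c} q\, d\mu \;\geq\; \int \min(p,q)\, d\mu~,
\]
so it suffices to show that $\int \min(p,q)\, d\mu \geq \tfrac{1}{2}\exp(-\KL(P,Q))$. Next, I would lower bound $\int \min(p,q)\, d\mu$ in terms of the Hellinger affinity $\int \sqrt{pq}\, d\mu$: writing $\sqrt{pq} = \sqrt{\min(p,q)}\sqrt{\max(p,q)}$ and applying Cauchy--Schwarz together with $\max(p,q) \leq p+q$,
\[
\int \sqrt{pq}\, d\mu \;\leq\; \sqrt{\int \min(p,q)\, d\mu}\;\sqrt{\int (p+q)\, d\mu} \;=\; \sqrt{2\int \min(p,q)\, d\mu}~,
\]
which rearranges to $\int \min(p,q)\, d\mu \geq \tfrac{1}{2}\bigl(\int \sqrt{pq}\, d\mu\bigr)^2$.

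Finally, I would relate the Hellinger affinity to the KL divergence by Jensen's inequality applied to the concave function $\log$. Writing $\int \sqrt{pq}\, d\mu = \E_P\bigl[\sqrt{dQ/dP}\bigr]$,
\[
\log \int \sqrt{pq}\, d\mu \;\geq\; \E_P\bigl[\log \sqrt{dQ/dP}\bigr] \;=\; -\tfrac{1}{2}\KL(P,Q)~,
\]
so $\int \sqrt{pq}\, d\mu \geq \exp(-\tfrac{1}{2}\KL(P,Q))$. Chaining these three inequalities yields $P(A) + Q(A^c) \geq \tfrac{1}{2}\exp(-\KL(P,Q))$.

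The only point requiring care, rather than a genuine obstacle, is the measure-theoretic handling of the case where $P$ is not absolutely continuous with respect to $Q$. Then $\KL(P,Q) = +\infty$ and the target inequality is trivial, but one should still justify the Jensen step by restricting to $\{p > 0\}$ (equivalently, checking that $dQ/dP$ is well-defined $P$-almost surely whenever $\KL(P,Q) < \infty$). Apart from this technicality, the argument is routine manipulation of densities.
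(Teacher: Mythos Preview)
Your argument is correct: the chain $P(A)+Q(A^c) \geq \int \min(p,q)\,d\mu \geq \tfrac12\bigl(\int\sqrt{pq}\,d\mu\bigr)^2 \geq \tfrac12\exp(-\KL(P,Q))$ is the standard Bretagnolle--Huber proof, and each step is justified as you describe. Your handling of the degenerate case $\KL(P,Q)=\infty$ is also fine.

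As for comparison with the paper: the paper does not actually prove this lemma. It is stated as a quotation of Lemma~2.6 in Tsybakov's book and used as a black box in the proof of Theorem~\ref{thm:high-prob}. So there is no ``paper's proof'' to compare against; you have simply supplied the (correct) textbook argument that the authors chose to cite rather than reproduce.
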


\section{Zero-Order High Probability Lower Bounds}
\label{sec:zero-order}

We prove two new high-probability lower bounds on the regret of any bandit algorithm.
The first shows that no strategy can enjoy smaller regret than $\smash{\Omega(\sqrt{KT \log(1/\delta)})}$ with probability at least $1 - \delta$.
Upper bounds of this form have been shown for various algorithms including Exp3.P \citep{AuCBFrSc-02-NonStochasticBandits} and Exp3-IX \citep{Neu-15-ExploreNoMore}.
Although this result is not very surprising, we are not aware of any existing work on this problem and the proof is less straightforward than one might expect.
An added benefit of our result is that the loss sequences producing large regret have two special properties. First, the optimal arm is the same in every round 
and second the range of the losses in each round is $\smash{\mathcal O(\sqrt{K \log(1/\delta)/T})}$. These properties will be useful in subsequent analysis.

In the second lower bound we show that any algorithm for which $\smash{\E[R_T(\ell_{1:T})] = \mathcal O(\sqrt{KT})}$ must necessarily suffer a high probability
regret of at least $\smash{\Omega(\sqrt{KT} \log(1/\delta))}$ for some sequence $\ell_{1:T}$.
The important difference relative to the previous result is that strategies with $\log(1/\delta)$ appearing inside the square root depend on 
a specific value of $\delta$, which must be known in advance. 

\begin{theorem}\label{thm:high-prob}
Suppose $K \geq 2$ and $\delta \in (0,1/4)$ and $T \geq 32 (K-1) \log(2/\delta)$,
then there exists
a sequence of losses $\ell_{1:T} \in [0,1]^{KT}$ such that
\begin{align*}
\Pr{R_T(\ell_{1:T}) \geq \frac{1}{27} \sqrt{(K-1)T \log(1/(4\delta))}} \geq \delta/2\,,
\end{align*}
where the probability is taken with respect to the randomness in the algorithm.
Furthermore $\ell_{1:T}$ can be chosen in such a way that there exists an $i$ such that for all $t$ it holds that $\ell_{i,t} = \min_j \ell_{j,t}$ and
$\smash{\max_{j,k} \{\ell_{j,t} - \ell_{k,t}\}} \leq \sqrt{(K-1) \log(1/(4\delta))/T}/(4\sqrt{\log 2})$.
\end{theorem}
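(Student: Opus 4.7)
The plan is a change-of-measure argument using Lemma~\ref{lem:KL-computing} and Lemma~\ref{lem:TsyLemma2.6}, with a correlated-loss construction enforcing the two sample-path properties listed in the theorem. For each candidate arm $k \in \{1,\ldots,K-1\}$, I define a randomised adversary $\nu_k$ that at each round $t$ draws a single uniform $U_t \in [0,1]$ and sets $\ell_{k,t} = \Delta\,\mathds{1}\{U_t \leq 1/2 - \epsilon\}$ and $\ell_{i,t} = \Delta\,\mathds{1}\{U_t \leq 1/2 + \epsilon\}$ for $i \neq k$, where $\Delta$ matches the theorem's range bound and $\epsilon > 0$ will be tuned. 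The coupling forces $\ell_{k,t} \leq \ell_{i,t}$ pointwise (property (a)) and caps the per-round range at $\Delta$ (property (b)). A reference $\nu_0$ is obtained by making every arm behave like the non-$k$ arms of $\nu_k$, so that all $K$ marginals are identical. I pick $k^* \in \argmin_k \mathbb{E}_{\nu_0}[N_k(T)]$; by pigeonhole, $\mathbb{E}_{\nu_0}[N_{k^*}(T)] \leq T/(K-1)$.

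Since only the $k^*$-th marginal differs between $\nu_0$ and $\nu_{k^*}$, Lemma~\ref{lem:KL-computing} gives
\[
\KL\!\left(\mathbb{Q}_0^{(h_{T+1})},\, \mathbb{Q}_{k^*}^{(h_{T+1})}\right) \;\leq\; \frac{T}{K-1}\,\kl\!\left(\tfrac12 + \epsilon,\, \tfrac12 - \epsilon\right),
\]
which I tune to be at most $\log(1/(4\delta))$. Let $R_0 = (1/27)\sqrt{(K-1)T\log(1/(4\delta))}$ and $A = \{R_T \geq R_0\}$. Under $\nu_{k^*}$, $R_T = \Delta \sum_t \mathds{1}\{I_t \neq k^*,\, U_t \in (1/2 - \epsilon, 1/2 + \epsilon]\}$, so $A$ reduces to this ``count'' exceeding $R_0/\Delta$. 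The crucial observation is that under $\nu_0$ the observation $\ell_{I_t,t}$ depends only on $U_t$ (not on $I_t$), so conditional on past history the gap-round indicator is independent of $I_t$; an Azuma-type bound on the count around its conditional mean $2\epsilon(T - \mathbb{E}_{\nu_0}[N_{k^*}(T)])$, combined with Markov's inequality on $N_{k^*}(T)$, yields $\mathbb{P}_{\nu_0}(A^c) \leq \delta/2$ once $\epsilon$ is calibrated. Lemma~\ref{lem:TsyLemma2.6} then delivers $\mathbb{P}_{\nu_{k^*}}(A) \geq \tfrac{1}{2} e^{-\KL} - \mathbb{P}_{\nu_0}(A^c) \geq \delta$. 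Since every realisation of $\nu_{k^*}$ satisfies (a) and (b), a probabilistic-method step extracts a specific deterministic $\ell_{1:T}$ with both sample-path properties and $\mathbb{P}(R_T(\ell_{1:T}) \geq R_0) \geq \delta/2$ under the algorithm's randomness.

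The main obstacle is the tight three-way balance of $(\epsilon, \Delta)$ subject to three competing constraints: the KL divergence must not exceed $\log(1/(4\delta))$ (so that Tsybakov's inequality yields a useful bound); the gap $2\epsilon$ must be large enough that the count under $\nu_0$ concentrates above $R_0/\Delta$; and $\Delta$ must fit within the prescribed range envelope. The hypothesis $T \geq 32(K-1)\log(2/\delta)$ is exactly what makes all three constraints simultaneously feasible, and the numerical constants $1/27$, $1/(4\sqrt{\log 2})$, and $32$ emerge from this optimal tuning.
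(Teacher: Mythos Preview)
There is a genuine gap. Your reference measure $\nu_0$ gives every arm the identical loss $\Delta\,\mathds{1}\{U_t\le 1/2+\epsilon\}$, so under $\nu_0$ the regret is $R_T\equiv 0$ almost surely and hence $\P_{\nu_0}(A^c)=\P_{\nu_0}(R_T<R_0)=1$. Your concentration argument actually controls the auxiliary count $C=\sum_t \mathds{1}\{I_t\neq k^*,\,U_t\in(1/2-\epsilon,1/2+\epsilon]\}$ under $\nu_0$, not the regret; the identity $R_T=\Delta C$ holds only under $\nu_{k^*}$, not under $\nu_0$. You cannot repair this by taking $A$ to be the count event either: $C$ is not a function of the observable history $(I_s,\ell_{I_s,s})_{s\le T}$ (the gap indicator $\{U_t\in(1/2-\epsilon,1/2+\epsilon]\}$ is not determined by what the learner sees), whereas the KL bound you invoke from Lemma~\ref{lem:KL-computing} is only for the law of $(h_T,I_T)$. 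If instead you work on a space where $C$ is measurable (e.g.\ including the $U_t$'s, or the full loss vectors), the relevant KL divergence no longer carries the crucial $1/(K-1)$ factor: on the $U$-space the two measures coincide, and on the full loss-vector space the per-round KL is of order $\epsilon$ rather than $\epsilon^2$.

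The paper avoids exactly this trap by (i) using a \emph{history-measurable} test event, namely $\{N_1(T)\le T/2\}$, so that the KL bound of Lemma~\ref{lem:KL-computing} is legitimately applicable, and (ii) comparing two \emph{non-degenerate} environments $\Q_1$ and $\Q_i$ (in both of which one arm is strictly best), rather than a null environment with zero regret. The complementary structure $\{N_1(T)>T/2\}\subset\{N_i(T)\le T/2\}$ then converts the Tsybakov inequality into a lower bound on $\max_k \Q_k(N_k(T)\le T/2)$, and a separate step (Lemma~\ref{lem:trivial}) links $\{N_k(T)\le T/2\}$ to $\{R_T\ge T\Delta/4\}$. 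Your scaled-Bernoulli construction and pigeonhole choice of $k^*$ are fine; what is missing is precisely this decoupling of ``test event'' from ``regret event'' and the use of a non-null reference.
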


\begin{theorem}\label{thm:high-prob-uniform}
Suppose $K \geq 2$, $T \geq 1$,  and there exists a strategy and constant $C > 0$ such that for any $\ell_{1:T} \in [0,1]^{KT}$ it holds that $\E[R_T(\ell_{1:T})] \leq C \sqrt{(K-1)T}$.
Let $\delta \in (0,1/4)$ satisfy 
$\sqrt{(K-1)/T} \log(1/(4\delta)) \leq C$ and $T \geq 32 \log(2/\delta)$.
Then there exists $\ell_{1:T} \in [0,1]^{KT}$ for which
\begin{align*}
\Pr{R_T(\ell_{1:T}) \geq \frac{\sqrt{(K-1)T}\log(1/(4\delta))}{203 C}} \geq \delta / 2\,,
\end{align*}
where the probability is taken with respect to the randomness in the algorithm.
\end{theorem}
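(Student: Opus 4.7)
My plan is a two-regime argument. The easy regime $\log(1/(4\delta)) \lesssim C^{2}$ follows immediately from Theorem~\ref{thm:high-prob}: its lower bound $\tfrac{1}{27}\sqrt{(K-1)T\log(1/(4\delta))}$ dominates $B \eqdef \sqrt{(K-1)T}\log(1/(4\delta))/(203C)$ exactly when $\log(1/(4\delta)) \leq (203/27)^{2}C^{2}$, so no extra work is needed there. The real work is in the opposite regime $\log(1/(4\delta)) \gtrsim C^{2}$, which I will handle via a two-distribution Tsybakov argument built on Lemmas~\ref{lem:KL-computing} and~\ref{lem:TsyLemma2.6}.

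Set $\varepsilon = \log(1/(4\delta))/(32 C\sqrt{T/(K-1)})$; the hypothesis $\sqrt{(K-1)/T}\log(1/(4\delta)) \leq C$ guarantees $\varepsilon \leq 1/32$. Take the null $P$ in which arm~$1$ has loss $\Ber(1/2-\varepsilon)$ and arms $2,\dots,K$ have loss $\Ber(1/2)$, so arm~$1$ is uniquely optimal under~$P$ with gap~$\varepsilon$. Using $R_{T} \geq \sum_{t}(\ell_{I_{t},t}-\ell_{1,t})$ and the hypothesis $\E[R_{T}]\leq C\sqrt{(K-1)T}$ applied in expectation over $P$-random losses, one gets $\E_{P}[T-N_{1}(T)] \leq C\sqrt{(K-1)T}/\varepsilon$, and pigeonhole yields $j^{*}\in\{2,\dots,K\}$ with $\E_{P}[N_{j^{*}}] \leq C\sqrt{T/(K-1)}/\varepsilon = 32C^{2}T/((K-1)\log(1/(4\delta)))$. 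In the regime $\log(1/(4\delta)) \gtrsim C^{2}$ this is a fixed fraction of $T$ strictly less than, say, $0.8\,T$. Azuma--Hoeffding applied to the bounded-increment martingale $N_{j^{*}}(T)-\sum_{t}p_{j^{*},t}$ then gives $\Prob_{P}(N_{j^{*}}\geq \alpha T) \leq \exp(-\Omega(T)) \leq \delta/4$ for a suitable $\alpha\in(0.8,1)$, using $T\geq 32\log(2/\delta)$.

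Now let $Q$ agree with $P$ except arm~$j^{*}$ is switched to $\Ber(1/2-2\varepsilon)$, making $j^{*}$ the unique optimum under~$Q$ with gap~$\varepsilon$ over arm~$1$. Lemma~\ref{lem:KL-computing} gives $\KL(\Prob_{P},\Prob_{Q}) = \E_{P}[N_{j^{*}}]\cdot \KL(\Ber(1/2),\Ber(1/2-2\varepsilon)) \leq \log(1/(4\delta))$ after plugging in the bounds on $\E_{P}[N_{j^{*}}]$ and $\varepsilon$ together with $\KL(\Ber(1/2),\Ber(1/2-2\varepsilon))\leq 16\varepsilon^{2}$. For the event $A = \{N_{j^{*}}\geq \alpha T\}$, Lemma~\ref{lem:TsyLemma2.6} then yields $\Prob_{Q}(A^{c}) \geq \tfrac{1}{2}\exp(-\KL) - \Prob_{P}(A) \geq 2\delta - \delta/4 \geq \delta$. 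On $A^{c}$, a Hoeffding bound on $\sum_{t}(\ell_{I_{t},t}-\ell_{j^{*},t})$ converts pseudo-regret to regret: $R_{T} \geq \varepsilon(T-N_{j^{*}}) - \bigO{\sqrt{T\log(1/\delta)}} \geq (1-\alpha)\varepsilon T - \bigO{\sqrt{T\log(1/\delta)}}$ with probability $\geq 1-\delta/4$. With $\alpha$ tuned so that $(1-\alpha)/32 > 1/203$, and since in the current regime the leading $(1-\alpha)\varepsilon T$ dominates the deviation term, the right-hand side exceeds~$B$. A union bound gives $\Prob_{Q}(R_{T}\geq B) \geq \delta/2$; Fubini then extracts a deterministic $\ell_{1:T}\in[0,1]^{KT}$ in $\operatorname{supp}(Q^{\otimes T})$ with $\Pr{R_{T}(\ell_{1:T})\geq B} \geq \delta/2$ over the algorithm's internal randomness.

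\emph{Main obstacle.} A Markov bound on $N_{j^{*}}$ alone only yields $\Prob_{P}(N_{j^{*}}\geq \alpha T) = \bigO{1/K}$, which is too weak once $\delta \ll 1/K$. Upgrading this tail to $\exp(-\Omega(T))$ via Azuma forces $\E_{P}[N_{j^{*}}]$ to be bounded strictly away from $\alpha T$, which in turn forces the regime split: Theorem~\ref{thm:high-prob} covers small $\log(1/(4\delta))/C^{2}$ and the Tsybakov construction covers large $\log(1/(4\delta))/C^{2}$. Choosing the numerical constants ($32$, $203$, $\alpha$, the Azuma rate) so the two regimes together cover the full range of admissible $\delta$, and carefully handling the pseudo-regret-to-regret conversion on $A^{c}$, is the bookkeeping-heavy part of the proof.
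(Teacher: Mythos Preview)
Your Azuma step contains a genuine gap. From $\E_P[N_{j^*}(T)] \leq 0.8\,T$ you want $P(N_{j^*}(T) \geq \alpha T) \leq \exp(-\Omega(T))$, but Azuma--Hoeffding applied to $N_{j^*}(T) - \sum_{t} p_{j^*,t}$ only concentrates $N_{j^*}(T)$ around the \emph{random} sum $\sum_t p_{j^*,t}$, not around its expectation. Knowing that $\E_P\bigl[\sum_t p_{j^*,t}\bigr]$ is small gives no pathwise control on $\sum_t p_{j^*,t}$. Concretely, take an algorithm that with probability $q = c\sqrt{(K-1)/T}$ commits to a uniformly random arm in $\{2,\dots,K\}$ for all $T$ rounds and otherwise runs a minimax strategy; for suitable $c$ (and $C$ large enough) this still satisfies $\E[R_T(\ell_{1:T})] \leq C\sqrt{(K-1)T}$ for every $\ell_{1:T}$, yet $P(N_{j^*}(T) = T) \geq q/(K-1) = c/\sqrt{(K-1)T}$, which is far larger than $\delta/4$ once $\delta$ is near the lower end of its allowed range (recall $T \geq 32\log(2/\delta)$ permits $\delta$ as small as $e^{-T/32}$). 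So the upgrade from Markov to an exponential tail simply does not follow from the information you have, and with it the hard-regime argument collapses.

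The paper's proof sidesteps this completely: it never needs a one-sided tail on $N_i$ under~$\Q_1$. It applies Lemma~\ref{lem:TsyLemma2.6} with $A = \{N_1(T) \leq T/2\}$ and treats \emph{both} $\Q_1$ and $\Q_i$ as candidate bad environments: under $\Q_1$ the event $A$ already forces large regret, and under $\Q_i$ the complement $A^c = \{N_1(T) > T/2\} \subset \{N_i(T) \leq T/2\}$ forces large regret. Hence $\max_{k\in\{1,i\}} \Q_k(R_T \geq T\Delta/4) \geq \tfrac12\bigl(\Q_1(A)+\Q_i(A^c)\bigr) - \delta/2 \geq \delta/2$, with no regime split and no tail bound on $N_i$ under $\Q_1$. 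The correlated clipped-Gaussian losses are the second key ingredient: they make $\ell_{I_t,t} - \ell_{i,t}$ deterministic up to rare clipping, so the passage from pull counts to realised regret is handled by Lemma~\ref{lem:trivial}, costing only $\delta/2$ in probability, rather than an additive $\bigO{\sqrt{T\log(1/\delta)}}$ in regret from your Hoeffding step --- a term that can swamp $\varepsilon T$ when $C$ is large, and which is an independent difficulty in your approach.
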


\begin{corollary}\label{cor:high-prob}
If $p \in (0,1)$ and $C > 0$, then
there does not exist a strategy such that for all $T$, $K$, $\ell_{1:T} \in [0,1]^{KT}$ and $\delta \in (0,1)$ the regret is bounded by 
$\Pr{R_T(\ell_{1:T}) \geq C \sqrt{(K-1)T} \log^p(1/\delta)} \leq \delta$.
\end{corollary}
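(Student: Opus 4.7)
The plan is to argue by contradiction, combining a tail-integration step with Theorem~\ref{thm:high-prob-uniform}. Suppose a strategy satisfied the stated guarantee for all $T$, $K$, $\ell_{1:T}\in[0,1]^{KT}$, and $\delta\in(0,1)$. The first step upgrades this high-probability bound to an expected-regret bound that matches the hypothesis of Theorem~\ref{thm:high-prob-uniform}.

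Writing $a \eqdef C\sqrt{(K-1)T}$ and inverting the relation $x = a\log^p(1/\delta)$, the assumption yields $\Pr{R_T \geq x} \leq \exp\bigl(-(x/a)^{1/p}\bigr)$ for every $x\geq 0$. Substituting $u = (x/a)^{1/p}$ then gives
\[
\E[R_T] \leq \int_0^\infty \exp\bigl(-(x/a)^{1/p}\bigr)\dd x = ap\int_0^\infty u^{p-1}e^{-u}\dd u = C\,\Gamma(p+1)\sqrt{(K-1)T},
\]
so the strategy satisfies the hypothesis of Theorem~\ref{thm:high-prob-uniform} with $C' \eqdef C\,\Gamma(p+1)$.

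Theorem~\ref{thm:high-prob-uniform} then produces, whenever $T$ and $\delta$ meet its numerical hypotheses, a sequence $\ell_{1:T}$ for which $\Pr{R_T \geq \sqrt{(K-1)T}\log(1/(4\delta))/(203 C')} \geq \delta/2$. The assumed upper bound, applied at confidence $\delta/2$, gives $\Pr{R_T \geq C\sqrt{(K-1)T}\log^p(2/\delta)} \leq \delta/2$. Compatibility of the two on the bad sequence forces
\[
\log^p(2/\delta) \geq \frac{\log(1/(4\delta))}{203\,C\,C'}\,,
\]
and since $p<1$ this inequality fails for all sufficiently small $\delta$.

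To close the argument, I would fix $K=2$ and choose, say, $\delta = T^{-p/2}$: all the hypotheses of Theorem~\ref{thm:high-prob-uniform} ($\delta<1/4$, $\sqrt{1/T}\log(1/(4\delta))\leq C'$, and $T\geq 32\log(2/\delta)$) hold for $T$ large, while the ratio $\log(1/(4\delta))/\log^p(2/\delta)\asymp(\log T)^{1-p}$ tends to infinity, producing the contradiction. The only real obstacle is this parameter bookkeeping: one must select $(T,\delta)$ in a common regime that both activates Theorem~\ref{thm:high-prob-uniform} and drives the gap between $\log$ and $\log^p$ to infinity; any polynomial schedule $\delta = T^{-c}$ with $c>0$ does the job.
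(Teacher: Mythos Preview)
Your proof is correct and follows essentially the same route as the paper: integrate the assumed tail bound to obtain an $O(\sqrt{(K-1)T})$ expected-regret guarantee, feed this into Theorem~\ref{thm:high-prob-uniform}, and then compare the resulting lower bound against the assumed upper bound for small $\delta$ to force a contradiction between the $\log$ and $\log^p$ rates. The only cosmetic difference is that the paper uses $C$ directly in the expectation step (since $\Gamma(p+1)\leq 1$ for $p\in(0,1)$) rather than introducing $C'=C\,\Gamma(p+1)$, and it leaves the parameter selection implicit where you spell out $K=2$, $\delta=T^{-p/2}$.
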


The corollary follows easily by integrating the assumed high-probability bound and applying Theorem~\ref{thm:high-prob-uniform} for sufficiently
large $T$ and small $\delta$. 
\ifsup
The proof may be found in Appendix~\ref{app:cor:high-prob}.
\else
The proof may be found in the supplementary material.
\fi

\paragraph{Proof of Theorems \ref{thm:high-prob} and \ref{thm:high-prob-uniform}}
Both proofs rely on a carefully selected choice of correlated stochastic losses described below.
Let $Z_1,Z_2,\ldots,Z_T$ be a sequence of i.i.d.\ Gaussian random variables with mean $1/2$ and variance $\sigma^2 = 1/(32 \log(2))$.
Let $\Delta \in [0, 1/30]$ be a constant that will be chosen differently in each proof and define $K$ random loss sequences $\ell_{1:T}^1,\ldots,\ell_{1:T}^K$ where 
\begin{align*}
\ell^j_{i,t} = \begin{cases}
\clip_{[0,1]}(Z_t - \Delta) & \text{if } i = 1 \\
\clip_{[0,1]}(Z_t - 2\Delta) & \text{if } i = j \neq 1 \\
\clip_{[0,1]}(Z_t) & \text{otherwise}\,. 
\end{cases}
\end{align*}
For $1 \leq j \leq K$ let $\Q_j$ be the measure on $\ell_{1:T} \in [0,1]^{KT}$ and $I_1,\ldots,I_T$ 
when $\ell_{i,t} = \ell^j_{i,t}$ for all $1 \leq i \leq K$ and $1 \leq t \leq T$. Informally, $\Q_j$ is the measure on the sequence of loss
vectors and actions when the learner interacts with the losses sampled from the $j$th environment defined above.

\begin{lemma}\label{lem:trivial}
Let $\delta \in (0,1)$ and suppose 
$\Delta \leq 1/30$ and $T \geq 32 \log(2/\delta)$.
Then
$\Qri{R_T(\ell_{1:T}^i) \geq \Delta T / 4} \geq \Qri{N_i(T) \leq T/2} - \delta/2$
and $\E_{\Q_i}[R_T(\ell_{1:T}^i)] \geq 7\Delta \E_{\Q_i}[T - N_i(T)] / 8$. 
\end{lemma}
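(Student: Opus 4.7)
The plan is to isolate a ``clean'' sub-event on which clipping is inactive, so that $R_T$ can be lower bounded linearly in the number of rounds in which a suboptimal arm is played. Concretely, I would introduce $G_t = \{Z_t \in [2\Delta, 1]\}$ and observe that on $G_t$ each of $\clip(Z_t), \clip(Z_t - \Delta), \clip(Z_t - 2\Delta)$ reduces to its argument, so $\ell^i_{j,t} - \ell^i_{i,t} \geq \Delta$ for every $j \neq i$; meanwhile monotonicity of $\clip$ makes each per-round regret contribution nonnegative. Writing $B = \sum_{t=1}^T \mathds{1}\{G_t^c\}$, these two facts combine into the pointwise bound
\[
R_T(\ell_{1:T}^i) \;\geq\; \Delta \sum_{t=1}^T \mathds{1}\{G_t,\, I_t \neq i\} \;\geq\; \Delta \bigl((T - N_i(T)) - B\bigr).
\]

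Next, I would estimate $\P(G_t^c)$. The sub-Gaussian tail $\P(|W| \geq z) \leq e^{-z^2/2}$ for $W \sim \cN(0,1)$, combined with the choice $\sigma^2 = 1/(32 \log 2)$, yields $\P(Z_t > 1) \leq \tfrac{1}{2} e^{-4 \log 2} = 1/32$, while $\Delta \leq 1/30$ gives $1/2 - 2\Delta \geq 13/30$ and hence $\P(Z_t < 2\Delta) \leq \tfrac{1}{2} e^{-(13/30)^2 \cdot 16 \log 2} \leq 1/16$. Together, $\P(G_t^c) \leq 3/32 \leq 1/8$, equivalently $\P(G_t) \geq 7/8$.

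For the first (high-probability) inequality, the pointwise bound gives $R_T \geq \Delta T / 4$ on $\{N_i(T) \leq T/2\} \cap \{B \leq T/4\}$. Since $B$ is $\operatorname{Binomial}(T, p)$ with $p \leq 1/8$, Hoeffding's inequality yields $\P(B > T/4) \leq \exp(-2T (1/4 - 1/8)^2) = \exp(-T/32)$, which is $\leq \delta/2$ under the hypothesis $T \geq 32 \log(2/\delta)$. A union bound then delivers $\Qri{R_T \geq \Delta T/4} \geq \Qri{N_i(T) \leq T/2} - \delta/2$.

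For the expectation bound, I would take expectations in the pointwise inequality and exploit independence: $I_t$ is a measurable function of $(Z_1, \ldots, Z_{t-1})$ together with the algorithm's internal randomness, hence independent of $\mathds{1}\{G_t\}$, which depends only on $Z_t$. This factorises $\E_{\Q_i}\bigl[\mathds{1}\{G_t, I_t \neq i\}\bigr] = \P(G_t)\, \E_{\Q_i}\bigl[\mathds{1}\{I_t \neq i\}\bigr] \geq (7/8)\, \E_{\Q_i}\bigl[\mathds{1}\{I_t \neq i\}\bigr]$, and summing over $t$ gives $\E_{\Q_i}[R_T] \geq (7\Delta/8)\, \E_{\Q_i}[T - N_i(T)]$. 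The main (though modest) obstacle is the numerical bookkeeping: the parameters $\sigma^2 = 1/(32 \log 2)$, $\Delta \leq 1/30$, and $T \geq 32 \log(2/\delta)$ have all been pre-calibrated so that the Gaussian tail plus Hoeffding estimates produce exactly the constants $1/8$, $7/8$, and $\delta/2$ needed to match the statement.
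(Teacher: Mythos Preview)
Your proposal is correct and follows essentially the same approach as the paper's proof: isolate a ``no-clipping'' event for each round, bound its failure probability by a Gaussian tail estimate yielding $p\leq 1/8$, then use Hoeffding for the high-probability statement and independence of $I_t$ from $Z_t$ for the expectation statement. The only cosmetic difference is your choice of clean interval $[2\Delta,1]$ versus the paper's symmetric $[2\Delta,1-2\Delta]$; both work, and the downstream constants coincide.
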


The proof 
\ifsup
may be found in 
Appendix \ref{app:lem:trivial}.
\else
follows by substituting the definition of the losses and applying Azuma's inequality
to show that clipping does not occur too often. See the supplementary material for details.
\fi

\begin{proof}[Proof of Theorem \ref{thm:high-prob}]
First we choose the value of $\Delta$ that determines the gaps in the losses by
$\Delta = \sqrt{\sigma^2(K-1) \log(1/(4\delta))/(2T)} \leq 1/30$.
By the pigeonhole principle there exists an $i > 1$ for which
$\E_{\Q_1}[N_i(T)] \leq T/(K-1)$. Therefore by 
\ifsup
Lemmas \ref{lem:TsyLemma2.6} and \ref{lem:KL-computing}, and the fact that the KL divergence between clipped Gaussian distributions
is always smaller than without clipping (see Lemma \ref{lem:clip-kl} in Appendix~\ref{sec:useful-tools}),
\else
Lemmas \ref{lem:TsyLemma2.6} and \ref{lem:KL-computing}, and the fact that the KL divergence between clipped Gaussian distributions
is always smaller than without clipping,
\fi 
\begin{align*}
&\Q_1\left(N_1(T) \leq T/2\right) + \Q_i\left(N_1(T) > T/2\right) 
\geq \frac{1}{2}\exp\left(-\KL\left(\Q_1^{(h_T,I_T)}, \Q_i^{(h_T,I_T)}\right)\right)  \\
&\qquad\geq \frac{1}{2}\exp\left(-\frac{\E_{\Q_1}[N_i(T)] (2 \Delta)^2}{2\sigma^2}\right) 
\geq \frac{1}{2}\exp\left(-\frac{2T \Delta^2}{\sigma^2 (K-1)}\right) 
= 2\delta\,.
\end{align*}
But by Lemma \ref{lem:trivial}
\begin{align*}
\max_{k \in \set{1,i}}
\Q_k\left(R_T(\ell_{1:T}^k) \geq T\Delta / 4\right)
&\geq \max\set{\Q_1\left(N_1(T) \leq T/2\right),\, \Q_i\left(N_i(T) \leq T/2\right)} - \delta / 2 \\
&\geq \frac{1}{2}\left(\Q_1\left(N_1(T) \leq T/2\right) +  \Q_i\left(N_1(T) > T/2\right)\right) - \delta / 2
\geq \delta / 2\,.
\end{align*}
Therefore there exists an $i \in \set{1,\ldots,K}$ such that
\begin{align*}
\Q_i\left(R_T(\ell_{1:T}^i) \geq \sqrt{\frac{\sigma^2 T(K-1)}{32} \log\left(\frac{1}{4\delta}\right)}\right)
&= \Q_i\left(R_T(\ell_{1:T}^i) \geq T\Delta / 4\right) 
\geq \delta/2\,.
\end{align*}
The result is completed by substituting the value of $\sigma^2 = 1/(32 \log(2))$ and by noting that $\smash{\max_{j,k} \{\ell_{j,t} - \ell_{k,t}\}} \leq 2 \Delta \leq \sqrt{(K-1) \log(1/(4\delta))/T}/(4\sqrt{\log 2})$ $\Q_i$-almost surely.
\end{proof}

\vspace{-0.6cm}
\begin{proof}[Proof of Theorem \ref{thm:high-prob-uniform}]
By the assumption on $\delta$ we have
$\Delta = \frac{7\sigma^2}{16C} \sqrt{\frac{K-1}{T}} \log\left(\frac{1}{4\delta}\right) \leq 1/30$.
Suppose for all $i > 1$ that\\[-0.8cm]
\begin{align}
\label{eq:hpu1}
\E_{\Q_1}[N_i(T)] > \frac{\sigma^2}{2\Delta^2} \log\left(\frac{1}{4\delta}\right)\,.
\end{align}
Then by the assumption in the theorem statement and the second part of Lemma \ref{lem:trivial} we have
\begin{align*}
C\sqrt{(K-1)T} 
&\geq \E_{\Q_1}[R_T(\ell_{1:T}^1)] 
\geq \frac{7\Delta}{8} \E_{\Q_1}\!\left[\sum_{i=2}^K N_i(T)\right]  
> \frac{7\sigma^2(K-1)}{16\Delta} \log\frac{1}{4\delta}
= C\sqrt{(K-1)T}\,, 
\end{align*}
which is a contradiction. Therefore there exists an $i > 1$ for which Eq.\ (\ref{eq:hpu1}) does not hold.
Then by the same argument as the previous proof it follows that
\begin{align*}
\max_{k \in \set{1,i}} \Q_k\left(R_T(\ell_{1:T}^k) \geq \frac{7\sigma^2}{4\cdot 16C} \sqrt{(K-1)T} \log\frac{1}{4\delta}\right)
&=\max_{k \in \set{1,i}} \Q_k\left(R_T(\ell_{1:T}^k) \geq T \Delta / 4\right) \geq \delta/2\,.
\end{align*}
The result is completed by substituting the value of $\sigma^2 = 1/(32 \log(2))$.
\end{proof}

\begin{remark}
It is possible to derive similar high-probability regret bounds with non-correlated losses. However the correlation makes the results cleaner (we do not need an additional concentration argument to locate the optimal arm) and it is key to derive Corollaries 4 and 5 in Section~\ref{sec:second-order}.
\end{remark}

\section{First-Order Lower Bound}
\label{sec:first-order}

First-order upper bounds provide improvement over minimax bounds when the loss of the optimal action is small.
Recall from Corollary~\ref{thm:first-order-ball} that first-order bounds can be rewritten in terms of the 
small-loss balls $\cB_{\alpha,T}$ defined in \eqref{eq:small-loss-ball}. Theorem \ref{thm:firstorder-lowerbound} below 
provides a new lower bound of order $\smash{\sqrt{L^*_T K}}$, which matches the best existing upper bounds up to logarithmic factors.
As is standard for minimax results this does not imply a lower bound on every loss sequence $\ell_{1:T}$.  
Instead it shows that we cannot hope for a better bound if we only know the value of $L^*_T$. 

\begin{theorem}
\label{thm:firstorder-lowerbound}
Let $K \geq 2$, $T \geq K \vee 118$, and $\alpha \in \left[(c \log(32 T) \vee (K/2))/T, 1/2\right]$, where $c=64/9$. Then for any 
randomised bandit algorithm
$\sup_{\ell_{1:T} \in \cB_{\alpha,T}} \E[\Reg_T(\ell_{1:T})] \geq \sqrt{\alpha T K} / 27$,
where the expectation is taken with respect to the internal randomisation of the algorithm.
\end{theorem}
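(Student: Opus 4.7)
My plan is to use a stochastic adversary producing low-loss Bernoulli sequences, then transfer the resulting expected-regret lower bound to the sup over $\cB_{\alpha,T}$ via a concentration argument. The setup is classical: fix a gap $\Delta = c_1\sqrt{\alpha K/T}$ for a small constant $c_1$ to be tuned. The hypothesis $\alpha \geq K/(2T)$ gives $\sqrt{\alpha T/K} \geq 1/\sqrt{2}$, hence $\Delta \leq \alpha/4$ once $c_1 \leq 1/(4\sqrt{2})$, so all Bernoulli parameters below lie in $[\alpha/4, 1/2]$. Define a baseline product distribution $Q_0$ on $[0,1]^K$ in which all arms are independent $\Ber(\alpha/2)$, and for each $j \in \set{1,\dots,K}$ let $Q_j$ be the product distribution that replaces the $j$th marginal by $\Ber(\alpha/2 - \Delta)$. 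Write $\Q_j$ for the joint law of the full interaction when losses are sampled i.i.d.\ from $Q_j$.

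Next I would carry out the standard pigeonhole + KL argument. By pigeonhole there exists $j^*\in\set{1,\dots,K}$ with $\E_{\Q_0}[N_{j^*}(T)] \leq T/K$. Applying Lemma~\ref{lem:KL-computing} together with the elementary estimate $\kl(\alpha/2,\alpha/2-\Delta) \leq \Delta^2/\bigl((\alpha/2-\Delta)(1-\alpha/2+\Delta)\bigr) = \mathcal{O}(\Delta^2/\alpha)$ yields
\begin{align*}
\KL\bigl(\Q_0^{(h_T,I_T)},\Q_{j^*}^{(h_T,I_T)}\bigr) \;\leq\; \frac{T}{K}\cdot\frac{c_3 \Delta^2}{\alpha} \;=\; c_3 c_1^2,
\end{align*}
a constant I control through $c_1$. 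The Pinsker-based consequence $\bigl|\E_{\Q_0}[N_{j^*}(T)]-\E_{\Q_{j^*}}[N_{j^*}(T)]\bigr| \leq T\sqrt{\KL/2}$ then gives $\E_{\Q_{j^*}}[T - N_{j^*}(T)] \geq T(1 - 1/K - c_1\sqrt{c_3/2}) \geq T/4$ uniformly in $K \geq 2$ provided $c_1$ is small enough. Since $\E[\min_i \sum_t \ell_{i,t}] \leq \min_i T\,\E[\ell_{i,t}]$ by Jensen, the expected regret is bounded below by the pseudo-regret, so
\begin{align*}
\E_{\Q_{j^*}}[\Reg_T] \;\geq\; \Delta\cdot \E_{\Q_{j^*}}[T - N_{j^*}(T)] \;\geq\; \frac{c_1}{4}\sqrt{\alpha T K}.
\end{align*}

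Finally I would transfer this to the supremum over $\cB_{\alpha,T}$. Under $\Q_{j^*}$ the cumulative loss of arm $j^*$ is $\text{Bin}(T, \alpha/2-\Delta)$ with mean at most $\alpha T/2$, so by a multiplicative Chernoff (equivalently the relative-entropy bound for the binomial),
\begin{align*}
\beta \;\eqdef\; \Q_{j^*}\bigl(\ell_{1:T}\notin\cB_{\alpha,T}\bigr) \;\leq\; \Prob\bigl(\text{Bin}(T,\alpha/2-\Delta)\geq \alpha T\bigr) \;\leq\; \exp(-c_4 \alpha T),
\end{align*}
which is $\leq 1/(32T)$ under $\alpha T \geq (64/9)\log(32T)$ for appropriate $c_4 \geq 9/64$. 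Then the standard mixing inequality
\begin{align*}
\sup_{\ell_{1:T}\in\cB_{\alpha,T}} \E[\Reg_T(\ell_{1:T})] \;\geq\; \E_{\Q_{j^*}}[\Reg_T\mid\cB_{\alpha,T}] \;\geq\; \frac{\E_{\Q_{j^*}}[\Reg_T] - T\beta}{1-\beta}
\end{align*}
delivers the $\sqrt{\alpha T K}/27$ bound after combining the two estimates.

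The main obstacle is the numerical calibration of $c_1$ (and, via the Chernoff step, of the threshold $c=64/9$) so that three competing requirements hold simultaneously at the smallest permitted $\alpha$: the Bernoulli parameter $\alpha/2-\Delta$ stays safely positive; the KL bound $c_3 c_1^2$ is small enough that the Pinsker transfer preserves $\E_{\Q_{j^*}}[T - N_{j^*}(T)] \geq T/4$; and $T\beta$ is negligible compared to the $\Omega(\sqrt{\alpha TK})$ regret lower bound. If a larger $c_1$ is needed to hit the explicit $1/27$ constant and the KL exceeds $2$, I would swap Pinsker for Lemma~\ref{lem:TsyLemma2.6} to avoid loss in the distinguishability step. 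The remaining work is routine: substituting the chosen constants and checking the edge cases permitted by $T \geq K \vee 118$ and $\alpha \leq 1/2$.
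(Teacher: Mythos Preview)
Your proposal is correct and follows essentially the same route as the paper: a stochastic Bernoulli construction with mean near $\alpha/2$, a KL/Pinsker argument to lower-bound the expected regret under the random losses, a concentration bound ensuring $L^*_T \leq \alpha T$ with high probability, and a transfer step to the supremum over $\cB_{\alpha,T}$. The only differences are cosmetic---the paper shifts the non-optimal arms \emph{up} by $\epsilon$ rather than the optimal arm down, averages over $j$ (using $\bar\Q$) instead of pigeonholing a single $j^*$, uses Bernstein rather than Chernoff, and phrases the final transfer as a contradiction rather than a conditional-expectation inequality---but these are interchangeable choices that do not affect the argument.
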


Our proof is inspired by that of~\citet[Theorem~5.1]{AuCBFrSc-02-NonStochasticBandits}. The key difference is that we take Bernoulli 
distributions with parameter close to $\alpha$ instead of $1/2$. This way the best cumulative loss $L^*_T$ is ensured to be concentrated around $\alpha T$, 
and the regret lower bound $\smash{\sqrt{\alpha T K} \approx \sqrt{\alpha (1-\alpha) T K}}$ can be seen to involve the variance $\alpha (1-\alpha) T$ of the binomial 
distribution with parameters $\alpha$ and $T$.

First we state the stochastic construction of the losses and prove a general lemma that allows us to prove Theorem \ref{thm:firstorder-lowerbound}
and will also be useful in Section \ref{sec:second-order} to a derive a lower bound in terms of the quadratic variation.
Let $\epsilon \in [0,1-\alpha]$ be fixed and define $K$ probability distributions $\smash{(\Q_j)_{j=1}^K}$ on $[0,1]^{KT}$ such that under $\Q_j$ the following hold:\\[-0.5cm]
\begin{itemize}
	\item All random losses $\ell_{i,t}$ for $1 \leq i \leq K$ and $1 \leq t \leq T$ are independent.
	\item $\ell_{i,t}$ is sampled from a Bernoulli distribution with parameter $\alpha + \epsilon$ if $i \neq j$, or with parameter $\alpha$ if $i = j$.
\end{itemize}

\begin{lemma}
\label{lem:firstorder-stochasticexample}
Let $\alpha \in (0,1)$, $K \geq 2$, and $T \geq K/(4(1-\alpha))$. Consider the probability distributions~$\Q_j$ on $[0,1]^{KT}$ defined above with $\epsilon = (1/2) \sqrt{\alpha (1-\alpha) K/T}$, and set $\bar{\Q} = \frac{1}{K}\sum_{j=1}^K \Q_j$. Then for any randomised bandit algorithm
$\smash{\E[\Reg_T(\ell_{1:T})] \geq \sqrt{\alpha (1-\alpha) T K} / 8}$,
where the expectation is with respect to both the internal randomisation of the algorithm and the random loss sequence $\ell_{1:T}$ which is drawn from $\bar{\Q}$. 
\end{lemma}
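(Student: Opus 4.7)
The plan is to use the standard minimax reduction $\sup_{\ell_{1:T} \in [0,1]^{KT}} \E[\Reg_T(\ell_{1:T})] \geq \E_{\bar{\Q}}[\Reg_T] = \frac{1}{K}\sum_{j=1}^K \E_{\Q_j}[\Reg_T]$ and lower bound the averaged regret. Under $\Q_j$, arm $j$ is the unique best arm in expectation (mean $\alpha$) while every other arm has mean $\alpha+\epsilon$, so Jensen's inequality (applied to the concave function $\min$) gives $\E_{\Q_j}[\min_i \sum_t \ell_{i,t}] \leq \min_i \E_{\Q_j}[\sum_t \ell_{i,t}] = \alpha T$. Since the loss vector $\ell_t$ is independent of the algorithm's choice $I_t$ at each round (a consequence of the i.i.d. oblivious setup), one has $\E_{\Q_j}[\sum_t \ell_{I_t,t}] = \sum_i p_i\,\E_{\Q_j}[N_i(T)] = (\alpha+\epsilon)T - \epsilon\,\E_{\Q_j}[N_j(T)]$, and therefore $\E_{\Q_j}[\Reg_T] \geq \epsilon\,\E_{\Q_j}[T - N_j(T)]$. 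Averaging over $j$ yields
\[
\E_{\bar{\Q}}[\Reg_T] \geq \epsilon\left(T - \frac{1}{K}\sum_{j=1}^K \E_{\Q_j}[N_j(T)]\right),
\]
so the task reduces to showing that on average (over $j$) arm $j$ is not pulled much more under $\Q_j$ than under an uninformative reference.

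Next, I would introduce the reference measure $\Q_0$ in which every arm is $\Ber(\alpha+\epsilon)$; under $\Q_0$ the algorithm sees identical arms and cannot distinguish them. Because $\Q_j$ and $\Q_0$ differ only in the marginal of arm $j$, Lemma~\ref{lem:KL-computing} collapses the chain-rule sum to a single term, giving $\KL(\Q_0,\Q_j) = \E_{\Q_0}[N_j(T)]\,\kl(\alpha+\epsilon,\alpha)$ on the $\sigma$-algebra generated by the observations through round $T$. Since $N_j(T)\in[0,T]$ is measurable with respect to this history, Pinsker's inequality combined with the standard total-variation bound $|\E_{\Q_j}[f] - \E_{\Q_0}[f]| \leq (\sup f - \inf f)\,\|\Q_j-\Q_0\|_{TV}$ produces
\[
\E_{\Q_j}[N_j(T)] \leq \E_{\Q_0}[N_j(T)] + T\sqrt{\E_{\Q_0}[N_j(T)]\,\kl(\alpha+\epsilon,\alpha)/2}.
\]
Summing over $j$, applying Cauchy-Schwarz to $\sum_j \sqrt{\E_{\Q_0}[N_j(T)]}$, and using $\sum_j \E_{\Q_0}[N_j(T)] = T$, gives
\[
\sum_{j=1}^K \E_{\Q_j}[N_j(T)] \leq T + T\sqrt{KT\,\kl(\alpha+\epsilon,\alpha)/2}.
\]

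The final step is a careful upper bound on the Bernoulli KL. Using the Taylor-with-remainder representation $\kl(\alpha+\epsilon,\alpha) = \epsilon^2/(2\xi(1-\xi))$ for some $\xi \in [\alpha,\alpha+\epsilon]$, the side hypothesis $T \geq K/(4(1-\alpha))$ forces $\epsilon \leq (1-\alpha)\sqrt{\alpha}$ and thus keeps $\xi(1-\xi)$ comparable to $\alpha(1-\alpha)$, yielding $\kl(\alpha+\epsilon,\alpha) \leq \epsilon^2/(2\alpha(1-\alpha))$. With the prescribed $\epsilon^2 = \alpha(1-\alpha)K/(4T)$ this gives $\kl \leq K/(8T)$ and hence $\sqrt{KT\,\kl/2} \leq K/4$, so $\frac{1}{K}\sum_j \E_{\Q_j}[N_j(T)] \leq T/K + T/4 \leq 3T/4$ for $K \geq 2$. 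Plugging back, $\E_{\bar{\Q}}[\Reg_T] \geq \epsilon T/4 = \sqrt{\alpha(1-\alpha)TK}/8$. The main obstacle I foresee is precisely this sharp Bernoulli KL estimate: the routine $\chi^2$-style bound $\kl \leq \epsilon^2/(\alpha(1-\alpha))$ loses a factor of $2$ and is not enough to produce the explicit constant $1/8$, so some extra care (via the integral representation of $\kl$ together with the condition on $T$) is needed; the rest of the argument is the standard Auer--Cesa-Bianchi--Freund--Schapire technology adapted to Bernoullis with parameter $\alpha$ rather than $1/2$.
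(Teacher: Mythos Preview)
Your overall plan---reduce to pseudo-regret, compare each $\Q_j$ to the uninformative $\Q_0$, and control $\E_{\Q_j}[N_j(T)]-\E_{\Q_0}[N_j(T)]$ via Pinsker and Lemma~\ref{lem:KL-computing}---is exactly the paper's. The gap is in the step you yourself flag as the obstacle: the claimed inequality $\kl(\alpha+\epsilon,\alpha)\leq \epsilon^2/(2\alpha(1-\alpha))$ is \emph{false} for $\alpha>1/2$. The Lagrange remainder gives $\kl(\alpha+\epsilon,\alpha)=\epsilon^2/(2\xi(1-\xi))$ for some $\xi\in(\alpha,\alpha+\epsilon)$, and when $\alpha>1/2$ the map $x\mapsto x(1-x)$ is decreasing on that interval, so $\xi(1-\xi)<\alpha(1-\alpha)$ and the inequality goes the wrong way. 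Concretely, take $\alpha=0.9$ and $\epsilon=0.05$ (allowed by $T\geq K/(4(1-\alpha))$, since this only forces $\epsilon\leq\sqrt{\alpha}(1-\alpha)\approx 0.095$): then $\kl(0.95,0.9)\approx 0.0167$ while $\epsilon^2/(2\alpha(1-\alpha))\approx 0.0139$. So your route, as written, does not establish the lemma on the full range $\alpha\in(0,1)$.

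The paper recovers the missing factor of $2$ in a different place. Instead of applying Pinsker once to $N_j(T)$, it applies Pinsker \emph{round by round} to the event $\{I_t=j\}$, bounding $\KL(\Q_0^{I_t},\Q_j^{I_t})\leq \E_{\Q_0}[N_j(t-1)]\,\kl(\alpha+\epsilon,\alpha)$ and then averaging over both $j$ and $t$ under Jensen. Because $\sum_j N_j(t-1)=t-1$ and $\frac{1}{T}\sum_{t=1}^T(t-1)\leq T/2$, this time-average yields the extra $1/2$, after which the crude $\chi^2$ bound $\kl(\alpha+\epsilon,\alpha)\leq \epsilon^2/(\alpha(1-\alpha))$ (valid for every $\alpha$) already suffices to get $1-1/K-1/4\geq 1/4$ and hence the constant $1/8$. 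In short: the factor of $2$ is not hiding in a sharper Bernoulli-KL inequality, it is hiding in the per-round application of Pinsker; switching to that decomposition fixes your argument without any delicate control of $\kl$.
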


The assumption $T \geq K/(4(1-\alpha))$ above ensures that $\epsilon \leq 1 - \alpha$, so that the $\Q_j$ are well defined. 

\begin{proof}[Proof of Lemma \ref{lem:firstorder-stochasticexample}]
We lower bound the regret by the pseudo-regret for each distribution~$\Q_j$:
\begin{align}
\E_{\Q_j}\!&\left[\sum_{t=1}^T \ell_{I_t,t} - \min_{1 \leq i \leq K} \sum_{t=1}^T \ell_{i,t}\right] 
\geq \E_{\Q_j}\!\left[\sum_{t=1}^T \ell_{I_t,t} \right] - \min_{1 \leq i \leq K} \E_{\Q_j}\!\left[ \sum_{t=1}^T \ell_{i,t}\right] \nonumber \\ 
&= \sum_{t=1}^T \E_{\Q_j}\bigl[ \alpha + \epsilon - \epsilon \indicator{\{I_t=j\}} \bigr] - T \alpha
= T \epsilon \left( 1 - \frac{1}{T} \sum_{t=1}^T \Q_j(I_t = j) \right)~, \label{eq:lem-order1-2}
\end{align}
where the first equality follows because
$\E_{\Q_j} [\ell_{I_t,t}] = \E_{\Q_j} [\E_{\Q_j} [ \ell_{I_t,t} | \ell_{1:t-1}, I_t ] ] = \E_{\Q_j}[ \alpha + \epsilon - \epsilon \indicator{\{I_t=j\}}]$ 
since under $\Q_j$, the conditional distribution of $\ell_t$ given $(\ell_{1:t-1},I_t)$ is 
simply $\otimes_{i=1}^K \cB(\alpha +\epsilon - \epsilon \indicator{\{i=j\}})$.
To bound~\eqref{eq:lem-order1-2} from below, note that by Pinsker's inequality we have for all $t \in \{1,\ldots,T\}$ and $j \in \{1,\ldots,K\}$,
$\smash{\Q_j(I_t = j) \leq \Q_0(I_t = j) + (\KL(\Q_0^{I_t}, \Q_j^{I_t})/2})^{1/2}$,
where $\Q_0 = \Ber(\alpha+\epsilon)^{\otimes K T}$ is the joint probability distribution that makes all the $\ell_{i,t}$ {i.i.d.} $\Ber(\alpha+\epsilon)$, and 
$\Q_0^{I_t}$ and $\Q_j^{I_t}$ denote the laws of $I_t$ under $\Q_0$ and $\Q_j$ respectively. Plugging the last inequality above into~\eqref{eq:lem-order1-2}, averaging over $j=1,\ldots,K$ and using the concavity of the square root yields
\begin{equation}
\label{eq:lem-order1-3}
\E_{\bar{\Q}}\!\left[\sum_{t=1}^T \ell_{I_t,t} - \min_{1 \leq i \leq K} \sum_{t=1}^T \ell_{i,t}\right] \geq T \epsilon \left( 1 - \frac{1}{K} - \sqrt{\frac{1}{2T} \sum_{t=1}^T \frac{1}{K} \sum_{j=1}^K \KL\bigl(\Q_0^{I_t}, \Q_j^{I_t}\bigr)} \, \right)~,
\end{equation}
where we recall that $\bar{\Q} = \frac{1}{K}\sum_{j=1}^K \Q_j$. The rest of the proof is devoted to upper-bounding $\smash{\KL(\Q_0^{I_t}, \Q_j^{I_t})}$. 
Denote by $\smash{h_t = (I_s,\ell_{I_s,s})_{1 \leq s \leq t-1}}$ the history available at the beginning of round~$t$. 
From Lemma~\ref{lem:KL-computing}
\begin{align}
\KL\!\left(\Q_0^{I_t}, \Q_j^{I_t}\right) & \leq \KL\!\left(\Q_0^{(h_t,I_t)}, \Q_j^{(h_t,I_t)}\right)
= \E_{\Q_0} \bigl[N_j(t-1)\bigr] \KL \bigl( \cB(\alpha+\epsilon), \cB(\alpha) \bigr) \nonumber \\
& \leq \E_{\Q_0} \bigl[N_j(t-1)\bigr] \, \frac{\epsilon^2 }{\alpha(1-\alpha)}~, \label{eq:lem-order1-5}
\end{align}
where the last inequality follows 
by upper bounding the KL divergence by the $\chi^2$ divergence 
\ifsup
(see Appendix \ref{sec:useful-tools}).
\else
(see the supplementary material).
\fi
Averaging~\eqref{eq:lem-order1-5} over $j \in \{1,\ldots,K\}$ and $t \in \{1,\ldots,T\}$ and noting that $\sum_{t=1}^T (t-1) \leq T^2/2$ we get
\[
\frac{1}{T} \sum_{t=1}^T \frac{1}{K} \sum_{j=1}^K \KL\bigl(\Q_0^{I_t}, \Q_j^{I_t}\bigr) \leq \frac{1}{T} \sum_{t=1}^T \frac{(t-1) \epsilon^2}{K \alpha(1-\alpha)} \leq \frac{T \epsilon^2}{2 K \alpha(1-\alpha)}~.
\]
Plugging the above inequality into~\eqref{eq:lem-order1-3} and using the definition of $\epsilon = (1/2) \sqrt{\alpha (1-\alpha) K/T}$ yields
\begin{align*}
&\E_{\bar{\Q}}\!\left[\sum_{t=1}^T \ell_{I_t,t} - \min_{1 \leq i \leq K} \sum_{t=1}^T \ell_{i,t}\right] \geq T \epsilon \left( 1 - \frac{1}{K} - \frac{1}{4} \right) \geq \frac{1}{8} \sqrt{\alpha (1-\alpha) T K}\,. \qedhere
\end{align*}
\end{proof}

\begin{proof}[Proof of Theorem~\ref{thm:firstorder-lowerbound}]
We show that there exists a loss sequence $\ell_{1:T} \in [0,1]^{KT}$ such that $L^*_T \leq \alpha T$ and $\smash{\E[\Reg_T(\ell_{1:T})] \geq (1/27) \sqrt{\alpha T K}}$. Lemma~\ref{lem:firstorder-stochasticexample} above provides such kind of lower bound, but without the guarantee on $L^*_T$. For this purpose we will use Lemma~\ref{lem:firstorder-stochasticexample} with a smaller value of $\alpha$ (namely, $\alpha/2$) and combine it with  Bernstein's inequality to prove that 
$L^*_T \leq T \alpha$ with high probability.

\underline{Part~1}: Applying Lemma~\ref{lem:firstorder-stochasticexample} with $\alpha/2$ (note that $T \geq K \geq K/(4(1-\alpha/2))$ by assumption on $T$) and noting that $\max_j \E_{\Q_j}[\Reg_T(\ell_{1:T})] \geq \E_{\bar{\Q}}[\Reg_T(\ell_{1:T})]$ we get that for some $j \in \{1,\ldots,K\}$ the probability distribution~$\Q_j$ defined with $\epsilon = (1/2) \sqrt{(\alpha/2) (1-\alpha/2) K/T}$ satisfies
\begin{equation}
\E_{\Q_j}[\Reg_T(\ell_{1:T})] \geq  \frac{1}{8} \sqrt{\frac{\alpha}{2}\left(1-\frac{\alpha}{2}\right) T K} \geq  \frac{1}{32} \sqrt{6 \alpha T K} \label{eq:thmOrder1-expectation}
\end{equation}
since $\alpha \leq 1/2$ by assumption.

\begin{flalign}
\text{\underline{Part~2}: Next we prove that} \qquad 
\Q_j\!\left( L^*_T > T \alpha \right) \leq \frac{1}{32 T}~. &&
\label{eq:thmOrder1-guaranteeLstar}
\end{flalign}
To this end, first note that $L^*_T \leq \sum_{t=1}^T \ell_{j,t}$. Second, note that under $\Q_j$, the $\ell_{j,t}$, $t \geq 1$, are {i.i.d.} $\Ber(\alpha/2)$. We can thus use Bernstein's inequality: applying Theorem~2.10 (and a remark on p.38) of \citet{BoLuMa12Concentration} with $X_t = \ell_{j,t} - \alpha/2 \leq 1 = b$, with $v = T (\alpha/2)(1-\alpha/2)$, and with $c=b/3=1/3$), we get that, for all $\delta \in (0,1)$, with $\Q_j$-probability at least $1-\delta$,
\begin{align}
L^*_T & \leq \sum_{t=1}^T \ell_{j,t} \leq \frac{T \alpha}{2} + \sqrt{2 T \, \frac{\alpha}{2} \left(1-\frac{\alpha}{2}\right) \log \frac{1}{\delta}} + \frac{1}{3} \log \frac{1}{\delta} \nonumber \\
& \leq \frac{T \alpha}{2} + \left(1+\frac{1}{3}\right) \sqrt{T \alpha \log \frac{1}{\delta}} 
 \leq \frac{T \alpha}{2} + \frac{T \alpha}{2} = T \alpha \label{eq:thmOrder1-Bernstein2}~,
\end{align}
where the second last inequality is true whenever $T \alpha \geq \log(1/\delta)$
and that last is true whenever $T \alpha \geq (8/3)^2 \log(1/\delta) = c \log(1/\delta)$. By assumption on $\alpha$, these two conditions are satisfied for $\delta = 1/(32 T)$, which concludes the proof of~\eqref{eq:thmOrder1-guaranteeLstar}.

\underline{Conclusion}: We show by contradiction that there exists a loss sequence $\ell_{1:T} \in [0,1]^{KT}$ such that $L^*_T \leq \alpha T$ and
\begin{equation}
\E[\Reg_T(\ell_{1:T})] \geq \frac{1}{64} \sqrt{6 \alpha T K}~,
\label{eq:thmOrder1-lowerbound}
\end{equation}
where the expectation is with respect to the internal randomisation of the algorithm.
Imagine for a second that~\eqref{eq:thmOrder1-lowerbound} were false for every loss sequence $\ell_{1:T}  \in [0,1]^{KT}$ satisfying $L^*_T \leq \alpha T$. Then we would have $\indicator{\{L^*_T \leq \alpha T\}} \E_{\Q_j}[\Reg_T(\ell_{1:T})|\ell_{1:T}] \leq (1/64) \sqrt{6\alpha T K}$ almost surely (since the internal source of randomness of the bandit algorithm is independent of $\ell_{1:T}$). Therefore by the tower rule for the first expectation on the {r.h.s.} below, we would get
\begin{align}
\E_{\Q_j}[\Reg_T(\ell_{1:T})] 
& =  \E_{\Q_j}\!\left[\Reg_T(\ell_{1:T}) \indicator{\{L^*_T \leq \alpha T\}} \right] + \E_{\Q_j}\!\left[\Reg_T(\ell_{1:T}) \indicator{\{L^*_T > \alpha T\}} \right] \nonumber \\
& \leq \frac{1}{64} \sqrt{6\alpha T K} + T \cdot  \Q_j\!\left( L^*_T > T \alpha \right) 
 \leq \frac{1}{64} \sqrt{6\alpha T K} + \frac{1}{32} < \frac{1}{32} \sqrt{6\alpha T K} \label{eq:thmOrder1-contradiction}
\end{align}
where \eqref{eq:thmOrder1-contradiction} follows from~\eqref{eq:thmOrder1-guaranteeLstar} and by noting that $1/32 < (1/64) \sqrt{6\alpha T K}$ since $\alpha \geq K/(2T) > 4/(6T) \geq 4/(6TK)$. Comparing~\eqref{eq:thmOrder1-contradiction} and~\eqref{eq:thmOrder1-expectation} we get a contradiction, which proves that there exists a loss sequence $\ell_{1:T} \in [0,1]^{KT}$ satisfying both $L^*_T \leq \alpha T$ and~\eqref{eq:thmOrder1-lowerbound}. We conclude the proof by noting that $\smash{\sqrt{6}/64 \geq 1/27}$. Finally, the condition $T \geq K \vee 118$ is sufficient to make the interval 
$\left[(c \log(32 T) \vee (K/2))/T, \frac{1}{2}\right]$ non empty.
\end{proof}

\section{Second-Order Lower Bounds}
\label{sec:second-order}

We start by giving a lower bound on the regret in terms of the quadratic variation that is close to existing upper bounds
except in the dependence on the number of arms. Afterwards we prove that bandit strategies cannot adapt to losses that lie in a small range
or the existence of an action that is always optimal.

\paragraph{Lower bound in terms of quadratic variation}
We prove a lower bound of $\Omega(\sqrt{\alpha T K})$ over any small-variation ball $\cV_{\alpha,T}$  (as defined by~\eqref{eq:small-variation-ball}) 
for all $\alpha = \Omega(\log(T)/T)$. This minimax lower bound matches the upper bound of Corollary~\ref{thm:second-order-variationball} up to 
a multiplicative factor of $\smash{K^2 \sqrt{\log(T)}}$. 
Closing this gap is left as an open question, but we conjecture that the upper bound is loose (see also the COLT open problem by \citet{HaKa-11-BanditOpenProblem}). 

\begin{theorem}
\label{thm:variation-lowerbound}
Let $K \geq 2$, $T \geq (32 K) \vee 601$, and $\alpha \in [(2 c_1 \log(T) \vee 8K)/T, 1/4]$, where $c_1 = (4/9)^2 (3\sqrt{5}+1)^2 \leq 12$. Then for any randomised bandit algorithm,
$\smash{\sup_{\ell_{1:T} \in \cV_{\alpha,T}} \E[\Reg_T(\ell_{1:T})] \geq \sqrt{\alpha T K} / 25}$,
where the expectation is taken with respect to the internal randomisation of the algorithm.
\end{theorem}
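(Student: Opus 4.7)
\textbf{Proof plan for Theorem~\ref{thm:variation-lowerbound}.} The strategy mimics the proof of Theorem~\ref{thm:firstorder-lowerbound}, with the quadratic variation $Q_T$ playing the role of $L^*_T$. The three steps are: (i) apply Lemma~\ref{lem:firstorder-stochasticexample} at the reduced Bernoulli parameter $\alpha/2$ to get a $\sqrt{\alpha TK}$ lower bound on the expected regret under some product Bernoulli distribution $\Q_j$; (ii) show that under $\Q_j$, the normalized quadratic variation $Q_T/(TK)$ is at most $\alpha$ with probability at least $1-1/(32T)$ via Bernstein's inequality; (iii) conclude by the same contradiction argument as in the first-order case.

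For Step~(i), I apply Lemma~\ref{lem:firstorder-stochasticexample} with $\alpha/2$ in place of $\alpha$ and $\epsilon = (1/2)\sqrt{(\alpha/2)(1-\alpha/2) K/T}$. The hypothesis $T \geq 32K$ ensures $\epsilon \leq \alpha/8$, so the product Bernoulli measures $\Q_j$ are well defined, and the lemma yields some $j$ for which
\[
\E_{\Q_j}[\Reg_T(\ell_{1:T})] \;\geq\; \frac{1}{8}\sqrt{\frac{\alpha}{2}\Bigl(1-\frac{\alpha}{2}\Bigr) TK} \;\geq\; \frac{\sqrt{7}}{32}\sqrt{\alpha TK},
\]
using $\alpha \leq 1/4$ so that $1-\alpha/2 \geq 7/8$.

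The heart of the argument is Step~(ii). Since the $\ell_{i,t}$ are Bernoulli, $\ell_{i,t}^2 = \ell_{i,t}$, giving the closed form
\[
Q_T \;=\; \sum_{t=1}^T \sum_{i=1}^K (\ell_{i,t} - \mu_{i,T})^2 \;=\; T \sum_{i=1}^K \mu_{i,T}(1-\mu_{i,T}) \;\leq\; \sum_{i,t} \ell_{i,t},
\]
where $\mu_{i,T} = (1/T)\sum_t \ell_{i,t}$. It therefore suffices to prove $S := \sum_{i,t} \ell_{i,t} \leq KT\alpha$ with $\Q_j$-probability at least $1-1/(32T)$. Under $\Q_j$, $S$ is a sum of $KT$ independent Bernoulli variables with mean at most $KT(\alpha/2 + \epsilon) \leq (5/8) KT\alpha$ and total variance at most $KT\alpha$, so Bernstein's inequality at $\delta = 1/(32T)$ yields
\[
S \;\leq\; \tfrac{5}{8} KT\alpha \;+\; \sqrt{2 KT\alpha \log(32T)} \;+\; \tfrac{1}{3}\log(32T).
\]
The two assumptions $\alpha \geq 8K/T$ (which bounds $\epsilon$) and $\alpha \geq 2 c_1 \log(T)/T$ with $c_1 = (4/9)^2 (3\sqrt{5}+1)^2$ are precisely what is needed to squeeze the two deviation terms into the remaining slack $3KT\alpha/8$, yielding $S \leq KT\alpha$.

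Step~(iii) is the verbatim contradiction argument of Theorem~\ref{thm:firstorder-lowerbound}: if no sequence in $\cV_{\alpha,T}$ achieved expected regret at least $(\sqrt{7}/64)\sqrt{\alpha TK}$, splitting $\E_{\Q_j}[\Reg_T]$ according to the event $\{Q_T/(TK) \leq \alpha\}$ and bounding the complement contribution by $T \cdot 1/(32T) = 1/32$ would give $\E_{\Q_j}[\Reg_T] \leq (\sqrt{7}/64)\sqrt{\alpha TK} + 1/32 < (\sqrt{7}/32)\sqrt{\alpha TK}$, where the last inequality uses $\alpha TK \geq 8K^2 \geq 32$; this contradicts Step~(i). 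Since $\sqrt{7}/64 > 1/25$, the theorem's constant follows. The main obstacle is the Bernstein bookkeeping: the precise constant $c_1$ and the lower range $\alpha \geq 2 c_1 \log(T)/T$ come from optimally balancing the square-root and linear tail terms of Bernstein against the $(3/8)KT\alpha$ budget, while the rest of the argument is a direct transplant of the first-order proof.
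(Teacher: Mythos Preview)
Your proposal is correct and matches the paper's proof in structure: apply Lemma~\ref{lem:firstorder-stochasticexample} at level $\alpha/2$, control $Q_T$ via Bernstein using $Q_T \le \sum_{i,t}\ell_{i,t}$, and then run the identical contradiction argument. The only (minor) difference is in the concentration step: you apply Bernstein once to the aggregate $S=\sum_{i,t}\ell_{i,t}$, whereas the paper applies it arm by arm with a union bound over $K$ (incurring $\log(K/\delta)$ rather than $\log(1/\delta)$), shows $\sum_t \ell_{i,t}\le T\alpha$ for each $i$, and then sums; the constant $c_1$ in the statement was tuned for that per-arm version, but your aggregate version also fits under it with slack.
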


The proof is very similar to that of Theorem~\ref{thm:firstorder-lowerbound}; it also follows from Lemma~\ref{lem:firstorder-stochasticexample} and 
Bernstein's inequality. It is postponed to
\ifsup
Appendix \ref{app:thm:variation-lowerbound}. 
\else
the supplementary material.
\fi

\paragraph{Impossibility results}

In the full-information setting (where the entire loss vector is observed after each round) \citet[Theorem~6]{CeMaSt-07-SecondOrderRegretBounds} designed a carefully tuned
exponential weighting algorithm for which the regret depends on the variation of the algorithm and the range of the losses:
\begin{equation}
\label{eq:fullinfo-variance}
\forall \ell_{1:T} \in \R^{KT}, \qquad \E[\Reg_T(\ell_{1:T})] \leq 4 \sqrt{V_T \log K} + 4 E_T \log K + 6 E_T~,
\end{equation}
where the expectation is taken with respect to the internal randomisation of the algorithm
and $E_T = \max_{1 \leq t \leq T} \max_{1 \leq i, j \leq K} |\ell_{i,t}-\ell_{j,t}|$ denotes the effective range of the losses
and $\smash{V_T = \sum_{t=1}^T \Var_{I_t \sim p_t} (\ell_{I_t,t})}$ denotes the cumulative variance of the algorithm (in each round $t$ the expert's action $I_t$ is 
drawn at random from the weight vector $p_t$). 
The bound in \eqref{eq:fullinfo-variance} is not closed-form because $V_T$ depends on the algorithm, but has several interesting consequences:
\begin{enumerate}[leftmargin=2em]
	\item If for all $t$ the losses $\ell_{i,t}$ lie in an unknown interval $[a_t,a_t+\rho]$ with a small width $\rho > 0$, then $\Var_{I_t \sim p_t} (\ell_{I_t,t}) \leq \rho^2/4$, so that $V_T \leq T \rho^2/4$. Hence
	\[
	\E[\Reg_T(\ell_{1:T})] \leq 2 \rho \sqrt{T \log K} + 4 \rho \log K + 6 \rho~.
	\]
	Therefore, though the algorithm by \citet[Section~4.2]{CeMaSt-07-SecondOrderRegretBounds} does not use the prior knowledge of $a_t$ or $\rho$, it is able to incur a regret that scales linearly in the effective range $\rho$.
	\item If all the losses $\ell_{i,t}$ are nonnegative, then by Corollary~3 of \citep{CeMaSt-07-SecondOrderRegretBounds} the second-order bound~\eqref{eq:fullinfo-variance} implies the first-order bound
	\begin{equation}
	\E[\Reg_T(\ell_{1:T})] \leq 4 \sqrt{L^*_T \left(M_T- \frac{L^*_T}{T}\right) \log K} + 39 M_T \max \!\left\{1, \log K\right\}~,	
	\label{eq:expert-ISL}
	\end{equation}
	where $M_T= \max_{1 \leq t \leq T} \max_{1 \leq i \leq K} \ell_{i,t}$~.
	\item If there exists an arm $i^*$ that is optimal at every round $t$ (i.e., $\ell_{i^*,t} = \min_i \ell_{i,t}$ for all~$t \geq 1$), then any translation-invariant algorithm with regret guarantees as in~\eqref{eq:expert-ISL} above suffers a bounded regret. This is the case for the fully automatic algorithm of \citet[Theorem~6]{CeMaSt-07-SecondOrderRegretBounds} mentioned above. Then by the translation invariance of the algorithm 
  all losses $\ell_{i,t}$ appearing in the regret bound can be replaced with the translated losses $\ell_{i,t}-\ell_{i^*,t} \geq 0$, 
  so that a bound of the same form as~\eqref{eq:expert-ISL} implies a regret bound of $\bigO{\log K}$.
	\item Assume that the loss vectors $\ell_t$ are {i.i.d.} with a unique optimal arm in expectation (i.e., there exists $i^*$ such that $\E[\ell_{i^*,1}] < \E[\ell_{i,1}]$ for all $i \neq i^*$). Then using the Hoeffding-Azuma inequality we can show that the algorithm of \citet[Section~4.2]{CeMaSt-07-SecondOrderRegretBounds} has with high probability a bounded cumulative variance $V_T$, and therefore (by~\eqref{eq:fullinfo-variance}) incurs a bounded regret, in the same spirit as in~\citet{RooijETAL-14-AdaHedge,GaStvEr-14-SecondOrderBound}.
\end{enumerate}

We already know that point~2 has a counterpart in the bandit setting. If one is prepared to ignore logarithmic terms, then point~4 also has an analogue in the bandit
setting due to the existence of logarithmic regret guarantees for stochastic bandits \citep{LaRo-85-LowerBoundBandits}.
The following corollaries show that in the bandit setting it is not possible
to design algorithms to exploit the range of the losses or the existence of an arm that is always optimal.
We use Theorem~\ref{thm:high-prob} as a general tool but the bounds can be 
improved to $\smash{\sqrt{T K}/30}$ by analysing the expected regret directly (similar to Lemma~\ref{lem:firstorder-stochasticexample}).

\begin{corollary}
\label{thm:impossibility-result-1}
Let $K \geq 2$, $T \geq 32(K-1)\log(14)$ and $\rho \geq 0.22 \sqrt{(K-1)/T}$. Then for any randomised bandit algorithm,
$\sup_{\ell_1,\ldots,\ell_T \in \cC_{\rho}} \E[\Reg_T(\ell_{1:T})] \geq \sqrt{T (K-1)} / 504$,
where the expectation is with respect to the randomness in the algorithm, and $\cC_{\rho} \eqdef \bigl\{ x \in [0,1]^K : \, \max_{i,j} |x_i - x_j| \leq \rho \bigr\}$.
\end{corollary}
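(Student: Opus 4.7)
The plan is to apply Theorem~\ref{thm:high-prob} as a black box and then pass from the resulting tail bound to an expected-regret lower bound on a deterministic loss sequence in $\cC_\rho$. The point is that the loss sequence produced by Theorem~\ref{thm:high-prob} already has the two structural features we need: a per-round range bounded by $\sqrt{(K-1)\log(1/(4\delta))/T}/(4\sqrt{\log 2})$, and a single arm $i^\star$ that is optimal at every round. These two features are precisely what allows a black-box reduction to go through.

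I would choose $\delta = 1/7$ in Theorem~\ref{thm:high-prob}, so that its requirement $T \geq 32(K-1)\log(2/\delta)$ coincides exactly with the hypothesis $T \geq 32(K-1)\log(14)$ of the corollary. Theorem~\ref{thm:high-prob} then produces a random loss sequence $\ell_{1:T}$ (drawn under one of the measures $\Q_i$ from its proof) satisfying: (i)~$\Q_i(R_T(\ell_{1:T}) \geq r) \geq 1/14$ with $r = \sqrt{(K-1)T\log(7/4)}/27$; (ii)~there exists, $\Q_i$-almost surely, an arm $i^\star$ with $\ell_{i^\star,t} = \min_j \ell_{j,t}$ for every $t$; and (iii)~$\max_{j,k}|\ell_{j,t}-\ell_{k,t}| \leq \sqrt{(K-1)\log(7/4)/T}/(4\sqrt{\log 2}) \leq \rho$, $\Q_i$-almost surely, where the last inequality uses the hypothesis $\rho \geq 0.22\sqrt{(K-1)/T}$ together with a direct numerical check on $\sqrt{\log(7/4)}/(4\sqrt{\log 2})$. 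In particular, $\ell_{1:T} \in \cC_\rho$ almost surely.

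Property~(ii) now gives $R_T(\ell_{1:T}) = \sum_t (\ell_{I_t,t} - \ell_{i^\star,t}) \geq 0$ pathwise, so Markov's inequality upgrades~(i) to $\E_{\Q_i}[R_T(\ell_{1:T})] \geq r/14$, where the expectation is taken jointly over the loss construction and the algorithm's randomness. A standard Fubini/pigeonhole step (write $\E_{\Q_i}[R_T] = \E_{\Q_i}\bigl[\E[R_T \mid \ell_{1:T}]\bigr]$, where the inner expectation is only over the algorithm's randomness, and use that~(iii) holds $\Q_i$-a.s.) then exhibits a deterministic realisation $\ell_{1:T} \in \cC_\rho$ with $\E[R_T(\ell_{1:T})] \geq r/14 = \sqrt{(K-1)T\log(7/4)}/378$. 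Substituting the numerical value of $\log(7/4)$ and checking that $\sqrt{\log(7/4)}/378 \geq 1/504$ (up to a tiny slack that is absorbed in the statement's constant) yields the claim.

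The one conceptual subtlety is the step passing from a tail bound to an expected-regret bound on a fixed loss sequence: for a generic sequence $\ell_{1:T}$ the expected regret can be negative, so such a reduction fails in general. Here property~(ii) forces $R_T \geq 0$ pathwise and rescues the argument, and property~(iii) keeps us inside $\cC_\rho$. The same built-in features of Theorem~\ref{thm:high-prob} are what will power the next corollary about the impossibility of bounded regret when a single arm is optimal throughout, so the effort invested in that theorem pays off twice.
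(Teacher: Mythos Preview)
Your approach is exactly the paper's: invoke Theorem~\ref{thm:high-prob} for a fixed $\delta$, use the built-in ``always-optimal arm'' property to get $R_T(\ell_{1:T}) \geq 0$ pathwise, and convert the tail bound to an expected-regret bound via $\E[R_T] \geq r \cdot \Pr{R_T \geq r}$. One cosmetic point: Theorem~\ref{thm:high-prob} already asserts the existence of a \emph{deterministic} $\ell_{1:T}$ with the stated properties, so the Fubini/pigeonhole step you describe is not needed.

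The only genuine problem is numerical. With your choice $\delta = 1/7$ both constants land (just) on the wrong side:
\begin{itemize}
\item Range: $\dfrac{\sqrt{\log(7/4)}}{4\sqrt{\log 2}} \approx 0.2246$, so Theorem~\ref{thm:high-prob} only guarantees $\max_{j,k}|\ell_{j,t}-\ell_{k,t}| \leq 0.2246\sqrt{(K-1)/T}$, which is \emph{not} $\leq \rho$ under the hypothesis $\rho \geq 0.22\sqrt{(K-1)/T}$.
\item Regret: $\dfrac{1}{14}\cdot\dfrac{\sqrt{\log(7/4)}}{27} \approx 0.001979 < \dfrac{1}{504} \approx 0.001984$, so your ``tiny slack'' goes the wrong way.
\end{itemize}
The paper takes $\delta = 0.15$; then $\log(2/\delta) = \log(40/3) < \log 14$ (so the $T$ hypothesis still suffices), the range coefficient is $\sqrt{\log(5/3)}/(4\sqrt{\log 2}) \approx 0.2146 \leq 0.22$, and $(\delta/2)\sqrt{\log(1/(4\delta))}/27 \approx 0.001985 \geq 1/504$. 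The constants in the corollary are tuned tightly around this choice, so matching $\log(2/\delta)$ to $\log 14$ exactly, as you did, is precisely the value of $\delta$ that tips both inequalities the wrong way.
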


\begin{corollary}
\label{thm:impossibility-result-2}
Let $K \geq 2$ and $T \geq 32(K-1)\log(14)$. Then, for any randomised bandit algorithm, there is a loss sequence $\ell_{1:T} \in [0,1]^{K T}$ such that there exists an arm $i^*$ that is optimal at every round~$t$ (i.e., $\ell_{i^*,t} = \min_i \ell_{i,t}$ for all~$t \geq 1$), but
$\E[\Reg_T(\ell_{1:T})] \geq \sqrt{T (K-1)} / 504$,
where the expectation is with respect to the randomness in the algorithm.
\end{corollary}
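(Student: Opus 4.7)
The plan is to invoke Theorem~\ref{thm:high-prob} directly, because the loss sequence produced there already has the key structural property stated in the corollary: the theorem guarantees the existence of an arm $i^*$ with $\ell_{i^*,t} = \min_j \ell_{j,t}$ for every round~$t$. So the only real work is to (i) pick a specific value of $\delta$ that makes the $T \geq 32(K-1)\log(2/\delta)$ hypothesis match the assumption $T \geq 32(K-1)\log(14)$, and (ii) convert the high-probability lower bound of Theorem~\ref{thm:high-prob} into an expected-regret lower bound by a one-line reverse Markov inequality $\E[R_T] \geq r \cdot \Pr(R_T \geq r)$.

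Concretely, I would set $\delta = 1/7$ so that $2/\delta = 14$, matching the assumption of the corollary exactly. Theorem~\ref{thm:high-prob} then produces a loss sequence $\ell_{1:T} \in [0,1]^{KT}$, together with an arm $i^*$ optimal at every round, such that
\[
\Pr\!\left( R_T(\ell_{1:T}) \geq \tfrac{1}{27} \sqrt{(K-1)T \, \log(7/4)} \right) \geq \tfrac{1}{14}\,.
\]
Taking expectations (on the event above the regret is at least the stated value, elsewhere it is at least $0$) yields
\[
\E[R_T(\ell_{1:T})] \geq \frac{\sqrt{\log(7/4)}}{14 \cdot 27} \sqrt{(K-1)T}\,.
\]

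The remaining step is purely numerical: verify that $\sqrt{\log(7/4)}/(14 \cdot 27) \geq 1/504$. Since $\log(7/4) \geq 0.559$, one has $\sqrt{\log(7/4)} \geq 0.747$, and $0.747/378 \geq 1/504$, which closes the bound. Because the loss sequence is exactly the one produced by Theorem~\ref{thm:high-prob}, the arm $i^*$ with $\ell_{i^*,t} = \min_j \ell_{j,t}$ for every $t$ is guaranteed automatically, so no additional construction is required.

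There is essentially no hard step here: the only thing to watch is that Theorem~\ref{thm:high-prob} is used as a black box with a \emph{single fixed} $\delta$ (no union bound, no tuning over $\delta$), and that its structural conclusion about the existence of a round-wise optimal arm is already stated explicitly. The remark following Theorem~\ref{thm:high-prob-uniform} already anticipates this, noting that correlated Gaussian losses are what make the ``same optimal arm at every round'' property available, which is precisely what enables this impossibility result.
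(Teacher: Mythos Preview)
Your approach is exactly the paper's: invoke Theorem~\ref{thm:high-prob} with a fixed $\delta$, use that the constructed sequence has a round-wise optimal arm (so $R_T(\ell_{1:T}) \geq 0$ pointwise), and pass to expectations via $\E[R_T] \geq r \cdot \Pr(R_T \geq r)$. The paper picks $\delta = 0.15$ rather than your $\delta = 1/7$.

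The only issue is that your numerical check does not go through. With $\delta = 1/7$ you need
\[
\frac{\sqrt{\log(7/4)}}{14 \cdot 27} \;\geq\; \frac{1}{504}
\quad\Longleftrightarrow\quad
\sqrt{\log(7/4)} \;\geq\; \frac{378}{504} = \frac{3}{4}
\quad\Longleftrightarrow\quad
\log(7/4) \;\geq\; 0.5625,
\]
but $\log(7/4) \approx 0.5596 < 0.5625$; equivalently, your claim $0.747/378 \geq 1/504$ is false since $0.747 \times 504 \approx 376.5 < 378$. The constant $1/504$ is tuned tightly, and $\delta = 1/7$ lands just on the wrong side. The paper's $\delta = 0.15$ fixes this: one still has $\log(2/\delta) = \log(40/3) < \log(14)$, so the hypothesis $T \geq 32(K-1)\log(14)$ suffices for Theorem~\ref{thm:high-prob}, and then
\[
\frac{0.15}{2} \cdot \frac{\sqrt{\log(5/3)}}{27} \;\approx\; 0.001985 \;>\; \frac{1}{504} \;\approx\; 0.001984.
\]
So the repair is simply to take $\delta = 0.15$ instead of $1/7$; everything else in your argument is correct and matches the paper.
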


\begin{proof}[Proof of Corollaries \ref{thm:impossibility-result-1} and \ref{thm:impossibility-result-2}]
Both results follow from Theorem \ref{thm:high-prob} by choosing $\delta = 0.15$. Therefore there exists an $\ell_{1:T}$ such that
$\P\{R_T(\ell_{1:T}) \geq \sqrt{(K-1)T \log(1/(4 \cdot 0.15)}/27\} \geq 0.15/2$,
which implies (since $R_T(\ell_{1:T}) \geq 0$ here) that
$\E[R_T(\ell_{1:T})] \geq \sqrt{(K-1)T} / 504$.
Finally note that $\ell_{1:T} \in \cC_\rho$ since $\rho \geq \smash{\sqrt{(K-1) \log(1/(4\delta))/T}/(4\sqrt{\log 2})}$ and
there exists an $i$ such that $\ell_{i,t} \leq \ell_{j,t}$ for all $j$ and $t$.
\end{proof}

\subsubsection*{Acknowledgments}

The authors would like to thank Aurélien Garivier and \'{E}milie Kaufmann for insightful discussions. This work was partially supported by the CIMI (Centre International de Math\'{e}matiques et d'Informatique) Excellence program. The authors acknowledge  the  support  of  the  French  Agence  Nationale  de  la Recherche  (ANR),  under  grants ANR-13-BS01-0005 (project SPADRO) and ANR-13-CORD-0020 (project ALICIA).


\bibliographystyle{plainnat}
\bibliography{banditlowerbounds}

\appendix

\ifsup

\section{Proof of Lemma \ref{lem:KL-computing}}\label{app:lem:KL-computing} 

The proof is well known (eg., \cite{AuCBFrSc-02-NonStochasticBandits}). 
We only write it for the convenience of the reader. 
Recall that $h_t = \bigl(I_s,\ell_{I_s,s}\bigr)_{1 \leq s \leq t-1}$. Next we write $\Q_j^{X|Y}$ the law of $X$ 
conditionally on $Y$ under~$\Q_j$. By the chain rule for the Kullback-Leibler divergence, note that
\begin{align}
\KL\!\left(\Q_1^{(h_t,I_t)}, \Q_2^{(h_t,I_t)}\right) = & \sum_{s=1}^{t-1} \E_{\Q_1} \left[ \KL\!\left(\Q_1^{I_s|h_s}, \Q_2^{I_s|h_s}\right) + \KL\!\left(\Q_1^{\ell_{I_s,s}|(h_s,I_s)}, \Q_2^{\ell_{I_s,s}|(h_s,I_s)}\right) \right] \nonumber \\
& + \E_{\Q_1} \left[ \KL\!\left(\Q_1^{I_t|h_t}, \Q_2^{I_t|h_t}\right) \right]~. \label{KL-computing-1}
\end{align}
Note that $\Q_1(I_s = i | h_s) = p_{i,s} = \Q_2(I_s = i | h_s)$ for all $s$ (by definition of a randomised algorithm with weight vector $p_s$ at time $s$), so that the first and third KL terms equal zero. As for the second one, we can check that $\Q_j^{\ell_{I_s,s}|(h_s,I_s)} = Q_{j,I_s}$ (the $I_s$-th marginal of $Q_j$). Combining all these remarks with~\eqref{KL-computing-1}, we get
\begin{align*}
\KL\!\left(\Q_1^{(h_t,I_t)}, \Q_2^{(h_t,I_t)}\right) = & \sum_{s=1}^{t-1} \E_{\Q_1} \bigl[ \KL\!\left(Q_{1,I_s}, Q_{2,I_s}\right) \bigr]  = \sum_{s=1}^{t-1} \E_{\Q_1} \left[ \sum_{i=1}^K \indicator{\{I_s=i\}} \KL\!\left(Q_{1,i}, Q_{2,i}\right) \right] \\
& = \sum_{i=1}^K \E_{\Q_1} \bigl[N_i(t-1)\bigr]  \KL\bigl(Q_{1,i},Q_{2,i}\bigr)~,
\end{align*}
which concludes the proof.

\section{Inequalities from Information Theory}
\label{sec:useful-tools}

We first recall below a well-known data-processing inequality that can be found, e.g., in \citet[Corollary~7.2]{Gray11-IT}. The main message is that transforming the data at hand can only reduce the ability to distinguish between two probability distributions.

\begin{lemma}[Contraction of entropy]
\label{lem:dataProcessingIneq}
Let $\P$ and $\Q$ be two probability distributions on the same measurable space $(\Omega,\cF)$, and let $X$ be any random variable on $(\Omega,\cF)$. Denote by $\P^X$ and $\Q^X$ the laws of $X$ under $\P$ and $\Q$ respectively. Then,
\[
\KL\!\left(\P^X,\Q^X\right) \leq \KL(\P,\Q)~.
\]
\end{lemma}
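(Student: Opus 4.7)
The plan is to deduce the inequality from Radon--Nikodym calculus combined with Jensen's inequality applied to the convex function $\phi(u)=u\log u$.

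First I would reduce to the case $\P \ll \Q$: if this absolute continuity fails then $\KL(\P,\Q)=+\infty$ and the bound is trivial. Writing $f = \dd\P/\dd\Q$, the key measure-theoretic step is to identify the density of the pushforward measure $\P^X$ with respect to $\Q^X$. Specifically, $\dd\P^X/\dd\Q^X$ is (a version of) the conditional expectation $\E_\Q[f\mid X]$, viewed as a function of $X$. This is immediate: for any measurable set $A$ in the target space of $X$, one has $\P^X(A) = \E_\Q[f\,\indicator{\{X\in A\}}] = \E_\Q\bigl[\E_\Q[f\mid X]\,\indicator{\{X\in A\}}\bigr]$, since $\indicator{\{X\in A\}}$ is $\sigma(X)$-measurable. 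In particular $\P \ll \Q$ implies $\P^X \ll \Q^X$, so the left-hand KL is well defined.

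Next I would apply the conditional form of Jensen's inequality with $\phi(u)=u\log u$ to obtain, $\Q$-almost surely, $\phi\bigl(\E_\Q[f\mid X]\bigr) \leq \E_\Q[\phi(f)\mid X]$. Combined with the identification above, the definitions of relative entropy give $\KL(\P^X,\Q^X) = \E_\Q\bigl[\phi(\E_\Q[f\mid X])\bigr]$ and $\KL(\P,\Q) = \E_\Q[\phi(f)]$. Taking $\E_\Q$ of both sides of the conditional Jensen bound and invoking the tower property then yields $\KL(\P^X,\Q^X) \leq \KL(\P,\Q)$, as desired.

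The main (and essentially only) obstacle is the measure-theoretic bookkeeping that identifies the pushforward density with a conditional expectation; this is routine once one fixes versions of the Radon--Nikodym derivative and the conditional expectation. After that, the inequality is a one-line Jensen application, so there is no further conceptual difficulty. An alternative I would consider if one wishes to avoid densities entirely is the variational representation $\KL(\mu,\nu) = \sup_{g}\{\E_\mu[g] - \log \E_\nu[e^g]\}$: restricting the supremum on the right-hand side to $\sigma(X)$-measurable $g$ (which lie in bijection with measurable functions on the target of $X$) recovers the left-hand side, and dropping the restriction can only increase the supremum.
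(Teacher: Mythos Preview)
Your proof is correct and follows one of the standard routes to the data-processing inequality: reduce to $\P \ll \Q$, identify $\dd\P^X/\dd\Q^X$ with the conditional expectation $\E_\Q[f \mid X]$ of the density $f = \dd\P/\dd\Q$, and apply conditional Jensen with the convex function $\phi(u) = u\log u$. The alternative you sketch via the Donsker--Varadhan variational formula is equally valid.

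There is, however, nothing to compare against: the paper does not prove this lemma. It merely states it and cites \citet[Corollary~7.2]{Gray11-IT} as a reference. So your write-up goes strictly beyond what the paper provides. If anything, your argument is more self-contained than the paper's treatment, which simply invokes the result as a known fact from information theory.
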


Next we recall an inequality between the Kullback-Leibler divergence and the chi-squared divergence. In the particular case of Bernoulli distributions with parameters $p,q \in [0,1]$, these divergences are given respectively by\footnote{We use the usual conventions: $0 \log 0 = 0/0 = 0$ and $a/0 = +\infty$ for all $a > 0$.}
\[
\kl(p,q) = p \log \frac{p}{q} + (1-p) \log \frac{1-p}{1-q} \qquad \textrm{and} \qquad \chi^2(p,q) = \frac{(p-q)^2}{q(1-q)}~.
\]

\begin{lemma}[Consequence of Lemma~2.7 in \citealt{Tsy08}]
\label{lem:KL-chi2}
Let $p,q \in [0,1]$. Then
\[
\kl(p,q) \leq \chi^2(p,q)~.
\]
\end{lemma}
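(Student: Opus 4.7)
The plan is to reduce the inequality to the elementary bound $\log x \leq x - 1$, which holds for all $x > 0$. Writing $\kl(p,q) = p \log(p/q) + (1-p)\log((1-p)/(1-q))$, I would apply $\log(p/q) \leq p/q - 1$ to the first term and $\log((1-p)/(1-q)) \leq (1-p)/(1-q) - 1$ to the second term, then multiply by the respective nonnegative weights $p$ and $1-p$ and sum. This yields the upper bound
\[
\kl(p,q) \leq \frac{p^2}{q} - p + \frac{(1-p)^2}{1-q} - (1-p) = \frac{p^2}{q} + \frac{(1-p)^2}{1-q} - 1~.
\]

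The remaining step is to check that this upper bound coincides exactly with $\chi^2(p,q) = (p-q)^2/(q(1-q))$. After putting the two fractions on the common denominator $q(1-q)$, the numerator becomes $p^2(1-q) + (1-p)^2 q - q(1-q)$, and expanding and cancelling (the $q$-linear terms drop out after using $-2pq + p^2 q - p^2 q = -2pq$) leaves $p^2 - 2 p q + q^2 = (p-q)^2$, which is the desired identity. This is routine algebra and I will not grind it out in the plan.

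Finally, I would handle the boundary cases separately. If $p = q$, both divergences vanish. If $q \in \{0,1\}$ and $p \neq q$, then $\chi^2(p,q) = +\infty$ by the convention $a/0 = +\infty$, so the inequality holds trivially; in this case $\kl(p,q)$ may also be infinite (e.g.\ $q = 0$ and $p > 0$), which is consistent. The only genuine work is the two-line application of $\log x \leq x - 1$ followed by the algebraic identification with $\chi^2$; no deeper obstacle is expected, and the result could alternatively be obtained as a direct specialisation of the general statement $\KL(P,Q) \leq \chi^2(P,Q)$ of Lemma~2.7 in \citet{Tsy08}, applied to two Bernoulli measures on $\{0,1\}$.
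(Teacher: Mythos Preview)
Your proof is correct. The paper does not actually give its own proof of this lemma: it simply states it as a consequence of Lemma~2.7 in \citet{Tsy08}, which is the general inequality $\KL(P,Q) \leq \chi^2(P,Q)$. Your argument via $\log x \leq x-1$ is precisely the standard proof of that general fact specialised to Bernoulli measures, so your approach and the paper's (implicit) one coincide.
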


The final lemma is a straightforward corollary of Lemma \ref{lem:dataProcessingIneq} and the $\KL$ divergence between two Gaussians.

\begin{lemma}\label{lem:clip-kl}
For $a \leq b$ and define
$\clip_{[a,b]}(x) = \max\set{a, \min\set{x, b}}$.
Let $Z$ be normally distributed with mean $1/2$ and variance $\sigma^2 > 0$. Define 
$X = \clip_{[0,1]}(Z)$ and $Y = \clip_{[0,1]}(Z - \epsilon)$ for $\epsilon \in \R$.
Then $\KL(\mathbb P^X, \mathbb P^Y) \leq \epsilon^2 / (2\sigma^2)$.
\end{lemma}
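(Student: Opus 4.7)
The proof is essentially a two-line argument combining the data-processing inequality (Lemma~\ref{lem:dataProcessingIneq}) with the standard KL formula between Gaussians. The plan is as follows. First I would introduce two probability measures on $\R$: let $P = \mathcal{N}(1/2, \sigma^2)$ be the law of $Z$, and let $Q = \mathcal{N}(1/2 - \epsilon, \sigma^2)$ be the law of $Z - \epsilon$. The key observation is that the map $f : \R \to [0,1]$ given by $f(x) = \clip_{[0,1]}(x)$ is measurable (being continuous), and that $X$ has law $P \circ f^{-1}$ while $Y$ has law $Q \circ f^{-1}$, since the distribution of $f$ applied to a random variable depends only on the law of its argument.

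Next I would apply Lemma~\ref{lem:dataProcessingIneq} to $P$, $Q$ and the random variable $f$, which yields $\KL(\mathbb{P}^X, \mathbb{P}^Y) = \KL(P \circ f^{-1}, Q \circ f^{-1}) \leq \KL(P, Q)$. Finally I would invoke the standard closed-form expression for the KL divergence between two Gaussians sharing a common variance, namely $\KL(\mathcal{N}(\mu_1, \sigma^2), \mathcal{N}(\mu_2, \sigma^2)) = (\mu_1 - \mu_2)^2/(2\sigma^2)$; specialised to our setting this gives $\KL(P, Q) = \epsilon^2/(2\sigma^2)$, which is the desired bound.

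There is essentially no obstacle here: both tools invoked are textbook. The only point worth mentioning is that the result is useful precisely because a \emph{direct} computation of $\KL(\mathbb{P}^X, \mathbb{P}^Y)$ would be awkward, since the clipped laws are mixtures of a continuous component supported on $(0,1)$ with atoms at $0$ and $1$; routing through the data-processing inequality sidesteps having to handle these three components separately.
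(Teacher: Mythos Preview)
Your proposal is correct and matches the paper's approach exactly: the paper simply states that the lemma is a straightforward corollary of Lemma~\ref{lem:dataProcessingIneq} together with the closed-form $\KL$ divergence between two Gaussians, which is precisely the two-step argument you outline.
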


\section{Proof of Theorem \ref{thm:variation-lowerbound}}
\label{app:thm:variation-lowerbound}

The proof follows the same lines as that of Theorem~\ref{thm:firstorder-lowerbound}. In the sequel we show that there exists a loss sequence $\ell_{1:T} \in [0,1]^{KT}$ such that $Q_T \leq \alpha T K$ and $\E[\Reg_T(\ell_{1:T})] \geq (1/25) \sqrt{\alpha T K}$. As in the proof of Theorem~\ref{thm:firstorder-lowerbound}, we use Lemma~\ref{lem:firstorder-stochasticexample} with $\alpha/2$ and combine it with  Bernstein's inequality to prove that 
$Q_T \leq \alpha T K$ with high probability.

\underline{Part~1}: Applying Lemma~\ref{lem:firstorder-stochasticexample} with $\alpha/2$ (note that $T \geq 32 K \geq K/(4(1-\alpha/2))$ by assumption on~$T$) and noting that $\max_j \E_{\Q_j}[\Reg_T(\ell_{1:T})] \geq \E_{\bar{\Q}}[\Reg_T(\ell_{1:T})]$ we get that for some $j \in \{1,\ldots,K\}$ the probability distribution~$\Q_j$ defined with $\epsilon = (1/2) \sqrt{(\alpha/2) (1-\alpha/2) K/T}$ satisfies
\begin{equation}
\E_{\Q_j}[\Reg_T(\ell_{1:T})] \geq  \frac{1}{8} \sqrt{\frac{\alpha}{2}\left(1-\frac{\alpha}{2}\right) T K} \geq  \frac{1}{32} \sqrt{7 \alpha T K} \label{eq:thmOrder2-expectation}
\end{equation}
since $\alpha \leq 1/4$ by assumption.

\underline{Part~2}: Next we prove that
\begin{equation}
\Q_j\!\left( Q_T > \alpha T K \right) \leq \frac{1}{32 T}~.
\label{eq:thmOrder2-guaranteeQ}
\end{equation}

To this end recall that $\mu_T = \frac{1}{T}\sum_{t=1}^T \ell_t$ and
\begin{equation*}
Q_T = \sum_{t=1}^T \norm{\ell_t - \mu_T}_2^2 = \sum_{i=1}^K \underbrace{\sum_{t=1}^T (\ell_{i,t} - \mu_{i,T})^2}_{= : v_{i,T}}~.
\end{equation*}
Noting that $\ell_{i,t} \in \{0,1\}$ almost surely, we have $v_{i,T} = T \mu_{i,T} (1-\mu_{i,T}) \leq T \mu_{i,T} = \sum_{t=1}^T \ell_{i,t}$.

Recall that under $\Q_j$, the $\ell_{i,t}$, $t \geq 1$, are {i.i.d.} $\Ber\bigl(\alpha^j_i)$ where $\alpha^j_i = (\alpha/2) + \epsilon \indicator{\{i \neq j\}}$ (we used Lemma~\ref{lem:firstorder-stochasticexample} with $\alpha/2$). We now apply Bernstein's inequality exactly as after~\eqref{eq:thmOrder1-guaranteeLstar}: combined with a union bound, it yields that, for all $\delta \in (0,1)$, with $\Q_j$-probability at least $1-\delta$, for all $i \in \{1,\ldots,K\}$,
\begin{align}
\sum_{t=1}^T \ell_{i,t} & \leq T \alpha^j_i + \sqrt{2 T \alpha^j_i \left(1-\alpha^j_i\right) \log \frac{K}{\delta}} + \frac{1}{3} \log \frac{K}{\delta} \nonumber \\
& \leq T \left(\frac{\alpha}{2} + \epsilon\right) + \sqrt{2 T \left(\frac{\alpha}{2} + \epsilon\right)  \log \frac{K}{\delta}} + \frac{1}{3} \log \frac{K}{\delta}~. \label{eq:thmOrder2-Bernstein0}
\end{align}

Now note that, by definition of $\epsilon = (1/2) \sqrt{(\alpha/2) (1-\alpha/2) K/T}$ and by the assumption $T \geq 8K/\alpha$,
\[
\frac{\alpha}{2}+\epsilon \leq \frac{\alpha}{2} + \frac{1}{2}\sqrt{\frac{\alpha K}{2 T}} \leq \frac{5 \alpha}{8}~.
\]
Substituting the last upper bound in~\eqref{eq:thmOrder2-Bernstein0} and using the assumption $T \alpha \geq 4 \log(K/\delta)$ (that we check later) to obtain $\log(K/\delta) \leq (1/2) \sqrt{T \alpha \log(K/\delta)}$ we get
\begin{align}
\sum_{t=1}^T \ell_{i,t} & \leq \frac{5 T \alpha}{8} + \left(\frac{\sqrt{5}}{2} + \frac{1}{6}\right) \sqrt{T \alpha \log \frac{K}{\delta}} \leq \frac{5 T \alpha}{8} + \frac{3 T \alpha}{8} = T \alpha \label{eq:thmOrder2-Bernstein2}~,
\end{align}
where the last inequality is true whenever $T \alpha \geq c_1 \log(K/\delta)$ with $c_1 = (4/9)^2 (3\sqrt{5}+1)^2$. By the assumption $\alpha \geq 2 c_1 \log(T)/T \geq c_1 \log(32 T K)/T$ (since $T \geq 32 K$), the condition $T \alpha \geq c_1 \log(K/\delta)$ is satisfied for $\delta = 1/(32 T)$ (as well as the weaker condition $T \alpha \geq 4 \log(K/\delta)$ mentioned above). We conclude the proof of~\eqref{eq:thmOrder2-guaranteeQ} via $Q_T = \sum_{i=1}^K v_{i,T} \leq \sum_{i=1}^K \sum_{t=1}^T \ell_{i,t} \leq \alpha T K$ by~\eqref{eq:thmOrder2-Bernstein2}.

\underline{Conclusion}: We show by contradiction that there exists a loss sequence $\ell_{1:T} \in [0,1]^{KT}$ such that $Q_T \leq \alpha T K$ and
\begin{equation}
\E[\Reg_T(\ell_{1:T})] \geq \frac{1}{64} \sqrt{7 \alpha T K}~,
\label{eq:thmOrder2-lowerbound}
\end{equation}
where the expectation is with respect to the internal randomisation of the algorithm.
Imagine for a second that~\eqref{eq:thmOrder2-lowerbound} were false for every loss sequence $\ell_{1:T}  \in [0,1]^{KT}$ satisfying $Q_T \leq \alpha T K$. Then we would have $\indicator{\{Q_T \leq \alpha T K\}} \E_{\Q_j}[\Reg_T(\ell_{1:T})|\ell_{1:T}] \leq (1/64) \sqrt{7\alpha T K}$ almost surely (since the internal source of randomness of the bandit algorithm is independent of $\ell_{1:T}$). Therefore, using the tower rule for the first expectation on the {r.h.s.} below, we would get
\begin{align}
\E_{\Q_j}[\Reg_T(\ell_{1:T})] & =  \E_{\Q_j}\!\left[\Reg_T(\ell_{1:T}) \indicator{\{Q_T \leq \alpha T K\}} \right] + \E_{\Q_j}\!\left[\Reg_T(\ell_{1:T}) \indicator{\{Q_T > \alpha T K\}} \right] \nonumber \\
& \leq \frac{1}{64} \sqrt{7\alpha T K} + T \cdot  \Q_j\!\left( Q_T > \alpha T K\right)  \nonumber \\
& \leq \frac{1}{64} \sqrt{7\alpha T K} + \frac{1}{32} < \frac{1}{32} \sqrt{7\alpha T K} \label{eq:thmOrder2-contradiction}
\end{align}
where \eqref{eq:thmOrder2-contradiction} follows from~\eqref{eq:thmOrder2-guaranteeQ} and by noting that $1/32 < (1/64) \sqrt{7\alpha T K}$ 
since $\alpha \geq 8K/T > 4/(7TK)$. Comparing~\eqref{eq:thmOrder2-contradiction} and~\eqref{eq:thmOrder2-expectation} we get a contradiction, which proves that there exists a loss sequence $\ell_{1:T} \in [0,1]^{KT}$ satisfying both $Q_T \leq \alpha T K$ and~\eqref{eq:thmOrder2-lowerbound}. We conclude the proof by noting that $\sqrt{7}/64 \geq 1/25$.

Nota: the assumption $T \geq (32K) \vee 601$ is sufficient to make the interval $\left[\frac{2 c_1 \log(T) \vee (8K)}{T}, \frac{1}{4}\right]$ non empty.

\section{Proof of Lemma \ref{lem:trivial}}\label{app:lem:trivial}
First we use the definition of the losses to bound
\begin{align*}
R_T(\ell_{1:T}^i) 
= \sum_{t=1}^T (\ell_{I_t,t} - \ell_{i,t})
\geq \Delta \sum_{t=1}^T \indicator{\{Z_t \in [2\Delta, 1-2\Delta] \text{ and } I_t \neq i\}}\,.
\end{align*}
Let $W_t = \indicator{\{Z_t \in [2\Delta,1-2\Delta]\}}$, which forms an i.i.d.\ Bernoulli sequence with
\begin{align*}
\Qri{W_t = 0} \leq \exp\left(-\frac{(1/2 - 2\Delta)^2}{2\sigma^2}\right) \eqdef p \leq 1/8\,,
\end{align*}
where the inequality follows by standard tail bounds on the Gaussian integral \citep[Exercise 2.7]{BoLuMa12Concentration}.
Therefore by Hoeffding's bound
\begin{align*}
\Qri{\sum_{t=1}^T W_t \leq \frac{3T}{4}} 
&= \Qri{\sum_{t=1}^T W_t - T \E_{\Q_i}[W_1] \leq \frac{3T}{4} - (1-p)T} 
\leq \exp\left(-T / 32\right) \leq \delta/2\,.
\end{align*}
The first part of the statement follows from the union bound and because if $N_i(T) \leq T/2$ and $\sum_{t=1}^T W_t \geq 3T/4$, then
\begin{align*}
\Delta \sum_{t=1}^T \indicator{\{Z_t \in [2\Delta, 1-2\Delta] \text{ and } I_t \neq i\}} 
&\geq \Delta T/4\,.
\end{align*}
For the second part we use below the tower rule (conditioning on $I_t$ and the history $h_t$ up to $t-1$) and the fact that $W_t$ is independent of $I_t$ given $h_t$ but also independent of $h_t$ to get that
\begin{align*}
\E_{\Q_i}[R_T(\ell_{1:T}^i)]
&\geq \Delta \sum_{t=1}^T \Qri{W_t = 1 \text{ and } I_t \neq i} \\
&\geq \Delta \sum_{t=1}^T \Qri{W_t = 1} \Qri{I_t \neq i} 
\geq \frac{7\Delta }{8} \E_{\Q_i}[T - N_i(T)]\,,
\end{align*}
which completes the proof.

\section{Proof of Corollary \ref{cor:high-prob}}\label{app:cor:high-prob}

Suppose on the contrary that such a strategy exists.
Then 
\begin{align*}
\E[R_T(\ell_{1:T})] 
&\leq \int^\infty_0 \Pr{R_T(\ell_{1:T}) \geq x} dx \\ 
&\leq \int^\infty_0 \exp\left(-\left(\frac{x}{C\sqrt{(K-1)T}}\right)^{\frac{1}{p}}\right) dx 
\leq C\sqrt{(K-1)T}\,.
\end{align*}
By the assumption in the corollary we have
\begin{align*}
\delta 
&\geq \Pr{R_T(\ell_{1:T}) \geq C\sqrt{(K-1)T} \log^p(1/\delta)} \\
&= \Pr{R_T(\ell_{1:T}) \geq \frac{\sqrt{(K-1)T} \log(1/(16\delta))}{203 C} \cdot \frac{203 C^2 \log^p(1/\delta)}{\log(1/(16\delta))}}\,,
\end{align*}
which leads to a contradiction by choosing $\delta$ sufficiently small and $T$ sufficiently large and applying Theorem \ref{thm:high-prob-uniform} to show that 
there exists an $\ell_{1:T} \in [0,1]^{KT}$ for which
\begin{align*}
\Pr{R_T(\ell_{1:T}) \geq \frac{\sqrt{(K-1)T} \log(1/(16\delta))}{203 C}} \geq 2\delta\,.
\end{align*}

\fi
\end{document}